\titleformat{\subsection}[runin]
{\bfseries} {\thesubsection{.}}{0.15cm}{}[.]
\titleformat{\subsubsection}[runin]
{\em}{\thesubsubsection{.}}{0.15cm}{}[.]
\newtheorem{theorem}{Theorem}[section]
\newtheorem{lemma}[theorem]{Lemma}
\theoremstyle{definition}
\newtheorem{definition}[theorem]{Definition}
\newtheorem{remark}[theorem]{Remark}
\newtheorem{problem}[theorem]{Problem}
\numberwithin{equation}{section}
\numberwithin{figure}{section}
\newcommand\Acal{\mathcal{A}}
\newcommand\Mcal{\mathcal{M}}
\newcommand\Tcal{\mathcal{T}}
\newcommand\Ascr{\mathscr{A}}
\newcommand\Cscr{\mathscr{C}}
\newcommand\C{\mathbb{C}}
\newcommand\D{\overline{\mathbb D}}
\renewcommand\D{\mathbb D}
\renewcommand\L{\mathbb{L}}
\newcommand\N{\mathbb{N}}
\newcommand\R{\mathbb{R}}
\renewcommand\c{\mathbb{C}}
\newcommand\h{\mathbb{H}}
\newcommand\n{\mathbb{N}}
\renewcommand\r{\mathbb{R}}
\newcommand\s{\mathbb{S}}
\newcommand\igot{\mathfrak{i}}
\renewcommand\igot{\mathfrak{i}}
\newcommand\mgot{\mathfrak{m}}
\newcommand\Agot{\mathbf{A}}
\newcommand\wt{\widetilde}
\newcommand\wh{\widehat}
\newcommand\dist{\mathrm{dist}}
\newcommand\length{\mathrm{length}}
\newcommand\CMC{\mathrm{CMC\text{-}1}}
\newcommand{\SL}{\text{SL}_2(\C)}
\def\dist{\mathrm{dist}}
\def\length{\mathrm{length}}
\definecolor{dark}{rgb}{0,0.55,0}
\definecolor{darkblue}{rgb}{0,0,0.55}
\definecolor{orange}{rgb}{0.8,0.5,0}
\definecolor{darkred}{rgb}{0.7,0,0}
\begin{document}
	
	
	\fancyhead[LO]{$\CMC$ surfaces in hyperbolic and de Sitter spaces with Cantor ends
	}
	\fancyhead[RE]{I.\ Castro-Infantes and J.\ Hidalgo}
	\fancyhead[RO,LE]{\thepage}
	
	\thispagestyle{empty}
	
	
	
	\begin{center}
		{\bf\Large $\CMC$ surfaces in hyperbolic and  \\
			de Sitter spaces with Cantor ends 
		} 
		
		\bigskip
		
		%
		%
		{\bf
			Ildefonso Castro-Infantes and
			Jorge Hidalgo}
	\end{center}
	
	
	%
	%
	
	\bigskip
	
	\begin{quoting}[leftmargin={7mm}]
		{\small
			\noindent {\bf Abstract}\hspace*{0.1cm}
			We prove that on every compact Riemann surface $M$ 
			there is a Cantor set $C \subset M$ such that $M \setminus C$ admits a proper conformal constant mean curvature one ($\CMC$) immersion into hyperbolic $3$-space $\h^3$. 
			Moreover, we obtain that every bordered Riemann surface admits an almost proper $\CMC$ face into de Sitter $3$-space $\s_1^3$, and we show that on every compact Riemann surface $M$ 
			there is a Cantor set $C \subset M$ such that $M \setminus C$ admits  an almost proper $\CMC$ face into $\s_1^3$.
			These results follow from different uniform approximation theorems for holomorphic null curves in $\C^2 \times \C^*$ that we also establish in this paper. 
			
			\noindent{\bf Keywords}\hspace*{0.1cm} 
			 $\CMC$ surface, CMC-1 face, Riemann surface, Cantor set, holomorphic null curve
			
			
			\noindent{\bf Mathematics Subject Classification (2020)}\hspace*{0.1cm} 
			53A10, 
			53C42, 
			32H02, 
			32E30. 
		}
	\end{quoting}
	
	
	
	\section{Introduction}
	In 1987 Bryant \cite{Bryant1987Asterisque} introduced a holomorphic representation for constant mean curvature one ($\CMC$ from now on) surfaces in hyperbolic $3$-space $\h^3$.
	 Using this representation, Alarcón and López proved in 2013 that every open orientable surface admits a complete bounded $\CMC$ immersion into $\h^3$, see \cite[Corollary III]{AlarconLopez2013MA}. Furthermore, Alarcón and Forstneri\v c showed in 2015 that every bordered Riemann surface (see Definition \ref{def:bordered}) admits a proper conformal $\CMC$ immersion into $\h^3$, see \cite[Corollary 3]{AlarconForstneric2015MA}. These results led Alarcón and Forstneri\v c to pose the next question. 
	\begin{problem}{\cite[Problem 1]{AlarconForstneric2015MA}}\label{p1}
		{\em Does every open Riemann surface admit a proper conformal $\CMC$ immersion into $\h^3$?}
	\end{problem}
	Our first main result contributes to Problem \ref{p1} by providing the first known examples of properly immersed $\CMC$ surfaces in $\h^3$ with Cantor ends.  
	Recall that a {\em Cantor set} is a compact, totally disconnected set (every connected component is just a point) with no isolated points. By {\em compact surface} we mean that the surface is topologically compact and has empty boundary.
	\begin{theorem}\label{th:mainH}
		Let $M$ be a compact Riemann surface. There exists a Cantor set $C \subset M$ such that $M \setminus C$ admits	
		a proper conformal $\CMC$ immersion into $\h^3$.
	\end{theorem}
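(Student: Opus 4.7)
The plan is to deduce Theorem \ref{th:mainH} from Bryant's representation together with the uniform approximation results for holomorphic null curves in $\C^2 \times \C^*$ announced in the abstract. Recall that Bryant's correspondence identifies conformal $\CMC$ immersions $f \colon N \to \h^3$ of an open Riemann surface $N$ with holomorphic null immersions $F \colon N \to \SL$, properness of $f$ being equivalent to properness of $F$ for the natural hyperbolic metric. Since the approximation machinery of the paper is phrased for holomorphic null curves in $\C^2 \times \C^*$, the strategy is to construct, for a suitable Cantor set $C \subset M$, a proper holomorphic null immersion $M \setminus C \to \C^2 \times \C^*$, and then transfer it back to $\h^3$ via Bryant's lift.

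Build $C$ as the intersection of a nested sequence $K_0 \supset K_1 \supset K_2 \supset \cdots$, where each $K_n \subset M$ is a disjoint union of $2^n$ pairwise disjoint closed topological disks, each of diameter less than $2^{-n}$ in a fixed background metric on $M$, with every component of $K_n$ containing exactly two components of $K_{n+1}$. Then $C := \bigcap_n K_n$ is a Cantor set, and the compact bordered subsurfaces $\Omega_n := M \setminus \mathrm{int}(K_n)$ form an exhaustion $\Omega_0 \subset \Omega_1 \subset \cdots$ of $M \setminus C$ such that each $\Omega_n$ is Runge in $\Omega_{n+1}$ and each connected component of $\overline{\Omega_n \setminus \Omega_{n-1}}$ is a disk with two subdisks removed.

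Construct $F$ inductively. Start from a holomorphic null immersion $F_0$ on a neighbourhood of $\Omega_0$, obtained from the bordered-Riemann-surface result of Alarcón--Forstneri\v c \cite{AlarconForstneric2015MA} applied to $\Omega_0$. At step $n \ge 1$, given a holomorphic null immersion $F_{n-1}$ defined near $\Omega_{n-1}$, apply the uniform approximation theorem for null curves in $\C^2 \times \C^*$ to obtain a null immersion $F_n$ near $\Omega_n$ with the following controls, for a fixed sequence $\varepsilon_n \downarrow 0$ with $\sum \varepsilon_n < \infty$ and a sequence $r_n \uparrow \infty$:
\begin{enumerate}
\item[(a)] $\sup_{\Omega_{n-1}} \dist(F_n(p), F_{n-1}(p)) < \varepsilon_n$, so that $\{F_n\}$ converges uniformly on compacta of $M \setminus C$;
\item[(b)] the image $F_n\bigl(\overline{\Omega_n \setminus \Omega_{n-1}}\bigr)$ lies in $\C^2 \times \C^* \setminus B_{r_n}$, where $B_{r_n}$ denotes the ball of radius $r_n$ with respect to a chosen proper metric on $\C^2 \times \C^*$.
\end{enumerate}
Condition (a) gives a holomorphic limit $F = \lim_n F_n$ on $M \setminus C$; a standard Cauchy-estimate argument, combined with an inductive lower bound on $|dF_n|$ arising from (a), ensures that $F$ is still an immersion and still satisfies the null condition. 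Condition (b) yields properness: any compactum $L \subset \C^2 \times \C^*$ is contained in $B_{r_n}$ for all large $n$, so $F^{-1}(L) \subset \Omega_{n-1}$ for such $n$ up to a small correction controlled by (a), hence $F^{-1}(L)$ is relatively compact in $M \setminus C$.

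The heart of the matter, and the principal obstacle, is the inductive step (b) combined with (a): one needs a Mergelyan/Runge-type approximation theorem for null curves taking values in $\C^2 \times \C^*$, which allows one simultaneously to approximate on the Runge compactum $\Omega_{n-1}$ and to push the image of the newly exposed pants $\overline{\Omega_n \setminus \Omega_{n-1}}$ outside arbitrarily large balls, while respecting both the quadratic null constraint and the non-vanishing of the last coordinate (which is the genuine obstruction distinguishing $\C^2\times\C^*$ from $\C^3$ and forces a more delicate period-killing argument with multiplicative monodromy). Once such an approximation theorem is available, the inductive scheme above runs without further trouble and produces the desired proper conformal $\CMC$ immersion of $M \setminus C$ into $\h^3$.
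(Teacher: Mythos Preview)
Your overall architecture is sound, but the key step does not work as stated. The composition $\pi_H\circ\Tcal\colon \C^2\times\C^*\to\h^3$ is \emph{not} a proper map, so a proper holomorphic null curve $F\colon M\setminus C\to\C^2\times\C^*$ need not project to a proper $\CMC$ immersion. For instance, along the path $t\mapsto (t,0,\igot t)$ in $\C^2\times\C^*$ one computes
\[
\Tcal(t,0,\igot t)=\begin{pmatrix}-\igot/t & -\igot\\ -\igot & 0\end{pmatrix}
\xrightarrow[t\to\infty]{}
\begin{pmatrix}0 & -\igot\\ -\igot & 0\end{pmatrix}\in\SL,
\qquad
\pi_H\circ\Tcal(t,0,\igot t)\xrightarrow[t\to\infty]{}\begin{pmatrix}1 & 0\\ 0 & 1\end{pmatrix}\in\h^3,
\]
so a divergent path in $\C^2\times\C^*$ is sent to a convergent one in $\h^3$. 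Consequently your condition (b), which pushes $F_n$ outside large balls in $\C^2\times\C^*$, gives no control on the image in $\h^3$.

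What the paper actually proves (Theorem~\ref{th:cantor}) is finer: it produces a null curve $\wt X=(\wt X_1,\wt X_2,\wt X_3)\colon M\setminus C\to\C^3$ for which the map $(1,\wt X_1,\wt X_2)/\wt X_3$ is proper. By \eqref{eq:norm} this is exactly what forces $\Tcal\circ\wt X$ to be proper in $\SL$. The mechanism is to keep $|\wt X_3|$ trapped in a fixed interval $(c_1,c_2)$ on the successive collars $M_n\setminus\mathring M_{n-1}$ while making $\mgot(\wt X)$ (hence $|(\wt X_1,\wt X_2)|$) tend to infinity there; this relies on Lemma~\ref{lemma:ap}, whose condition \ref{L4} controls the third component uniformly on all of $\overline M$, not just on the Runge compactum. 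Your generic ``push outside $B_{r_n}$'' does not encode this asymmetry between the components. A secondary issue is that the Cantor set in the paper is built \emph{adaptively}: the size of $\Omega_n$ is chosen only after the approximation at step $n$ has been carried out, so that the new null curve still satisfies the required inequalities on $M_n\setminus\mathring M_{n-1}$. Fixing the diameters in advance, as you do, removes the freedom needed to absorb the approximation errors.
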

	Theorem \ref{th:mainH} also holds when $M$ is a bordered Riemann surface, see Remark \ref{rmk:bd} and the proof of Theorem \ref{th:mainH} in Section \ref{sec:HS}. It is obtained as an application of Theorem \ref{th:cantor}, a Runge approximation type result for {\em holomorphic null curves}.  A holomorphic null curve in $\C^3$ is a holomorphic immersion $M \to \C^3$ from an open Riemann surface $M$ into the complex Euclidean $3$-space whose derivative with respect to any local holomorphic coordinate on $M$ takes values in $\Agot_* =\Agot\setminus \{0\}$, where $\Agot \subset \C^3$ is the {\em null quadric} 
	\begin{equation}\label{def:nullquadric}
		\Agot = \{ (z_1, z_2, z_3) \in \C^3 \colon z_1^2 + z_2^2 + z_3^2  = 0\}.
	\end{equation}
	Holomorphic null curves are strongly related to {\em minimal surfaces} in $\R^3$, this is, locally area minimizing surfaces, or equivalently, constant mean curvature zero surfaces.
	More precisely, the real and imaginary parts of a holomorphic null curve are conformal minimal immersions into $\r^3$, and every such immersion is locally the real part of a holomorphic null curve.
	This allows to use complex analytic methods for studying minimal surfaces; we refer to \cite{AlarconForstneric2019JAMS,AlarconForstnericLopez2021Book} for details.

	Similarly, there is a projection $ \C^2 \times \C^* \to \h^3$ ($\C^* = \C \setminus \{ 0\}$) arising from Bryant's representation and which takes holomorphic null curves into conformal $\CMC$ immersions in $\h^3$. Conversely, every $\CMC$ surface in $\h^3$ locally lifts to a holomorphic null curve in $\C^2 \times \C^*$. This projection, which is better described in Section \ref{sec:HS}, considerably simplifies the task of constructing $\CMC$ surfaces with control on the complex structure via complex analytic methods, see \cite{AlarconCastro-InfantesHidalgo2023} and the references therein.
	For instance, the fact that every bordered Riemann surface admits a proper conformal  $\CMC$ immersion in $\h^3$ follows from this projection and the following result proved by Alarcón and Forstneri\v c  in 2015: given constants $0 <c_1<c_2$ and a bordered Riemann surface $M$, there is a proper holomorphic null curve $X = (X_1, X_2, X_3) \colon M \to \C^3$ with $c_1 < |X_3|< c_2$ on $M$, see \cite[Theorem 2]{AlarconForstneric2015MA}.

Not every open Riemann surface admits a holomorphic null curve in $\C^3$ with bounded third component, hence  a different result in $\C^2 \times \C^*$ is needed to approach Problem \ref{p1} via the projection $\C^2 \times \C^* \to \h^3$.
In \cite{AlarconCastro-InfantesHidalgo2023},  Alarcón and the authors proved a Runge approximation result for holomorphic null curves in $\C^3$, following the approach in \cite{AlarconLopez2012JDG,AlarconForstneric2014IM,AlarconCastro-Infantes2019APDE}, but with an additional idea to control the zero set of the third component of the holomorphic null curves in $\C^3$. As a consequence, it was proved that every open Riemann surface admits a complete conformal $\CMC$ immersion into $\h^3$, see \cite[Theorem 1.1]{AlarconCastro-InfantesHidalgo2023}.
However, properness could not be achieved by technical reasons. 
Now, applying ideas of \cite{AlarconForstneric2015MA} and \cite{Forstneric2022RMI} we show that these technical difficulties can be overcome to construct proper conformal $\CMC$ surfaces in $\h^3$ with Cantor ends.

In this paper we also consider $\CMC$ spacelike surfaces in de Sitter $3$-space $\s_1^3$ with certain singularities.  Aiyama and Akutagawa gave in \cite{AiyamaAkutagawa1999AGAG} a holomorphic representation for conformal $\CMC$ spacelike immersions $M \to \s_1^3$ of Riemann surfaces into de Sitter space.  However, the only complete such surface  is the so called $\s_1^3$-horosphere, see \cite{Akutagawa1987, FRUYY09}, which is totally umbilical.
So, in order to have a rich global theory for spacelike $\CMC$ surfaces in $\s_1^3$ one needs to consider a wider class of surfaces than just complete and immersed ones. In 2006, Fujimori introduced the notion of {\em $\CMC$ face}, see  \cite[Definition 1.4]{Fujimori06}, later characterized in 2013, see \cite[Definition 1.1]{FKKRUY13}. According to the latter, a $\CMC$ face is a smooth map $M \to \s_1^3$, where $M$ is an open Riemann surface, which is a conformal spacelike immersion on an open dense subset of $M$ and whose differential does not vanish at any point. The points where $M \to \s_1^3$ is not an immersion are called {\em singular points}. 
Fujimori gave a holomorphic representation for $\CMC$ faces in \cite[Theorem 1.9]{Fujimori06}, which provides a projection $ \C^2 \times \C^* \to \s_1^3$ carrying holomorphic null curves into $\CMC$ faces, see Section \ref{sec:HS} for details. In view of this projection, and the strategy already exploited to construct $\CMC$ surfaces in $\h^3$, the analogue to Problem \ref{p1} can be reasonably posed.
\begin{problem}\label{p2}
		{\em Does every open Riemann surface admit a proper $\CMC$ face into $\s_1^3$?}
\end{problem}
There are many examples of $\CMC$ faces, see \cite{LeeYang06OJM, Fujimori06, MR2317280, FRUYY09RM, FKKRUY13,  MR3614568, MR3965794, MR4451410} for instance. However,  only few Riemann surfaces were known to be the complex structure of a $\CMC$ face until recently, when, using the projection $ \C^2 \times \C^* \to \s_1^3$,  Alarcón and the authors showed that every open Riemann surface $M$ admits a {\em weakly complete} \cite[Def. 1.3]{FRUYY09} $\CMC$ face $M \to \s_1^3$, see \cite[Corollary 5.3]{AlarconCastro-InfantesHidalgo2023}. In this paper we go further in this direction and give a first approach to Problem \ref{p2}. 
\begin{theorem}\label{th:bds}
	Every bordered Riemann surface $M$ admits a weakly complete almost proper $\CMC$ face into $\s^3_1$.
\end{theorem}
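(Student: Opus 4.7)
The plan is to reduce the statement to constructing a holomorphic null curve $F \colon \mathring M \to \C^2 \times \C^*$ (that is, a holomorphic null curve $F = (F_1, F_2, F_3) \colon \mathring M \to \C^3$ with $F_3$ nowhere vanishing) whose projection $f = \pi \circ F \colon \mathring M \to \s_1^3$ under Fujimori's map $\pi \colon \C^2 \times \C^* \to \s_1^3$ recalled in Section \ref{sec:HS} is weakly complete and almost proper. The projection $\pi$ converts a lower bound on the pullback of the $\s_1^3$ metric into weak completeness of $f$, and a divergence condition on $F$ at the ideal boundary of $\mathring M$ into almost properness of $f$.

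I would exhaust $\mathring M$ by a sequence $K_1 \Subset K_2 \Subset \cdots$ of smoothly bounded compact Runge domains with $\bigcup_n K_n = \mathring M$. Fix $p_0 \in \s_1^3$ and write $B_R \subset \s_1^3$ for the open geodesic ball of radius $R$ around $p_0$. The core of the proof is the inductive construction of holomorphic null curves $F_n$, defined on open neighborhoods of $K_n$ with values in $\C^2 \times \C^*$, satisfying
\begin{enumerate}[label={\rm (\roman*)}]
\item $\|F_{n+1} - F_n\|_{\Cscr^1(K_{n-1})} < 2^{-n}$;
\item $f_n(bK_n) \cap \overline{B_n} = \emptyset$, where $f_n = \pi \circ F_n$;
\item $\dist_{f_n}(bK_{n-1}, bK_n) > 1$;
\item $|F_{n,3}(z)| \geq \epsilon_n > 0$ for $z \in K_{n-1}$, with $(\epsilon_n)$ chosen so that the limit third coordinate is nowhere zero.
\end{enumerate}
From (i) the limit $F = \lim_n F_n$ exists as a holomorphic null curve on $\mathring M$; (iv) keeps its image in $\C^2 \times \C^*$; (ii) forces any connected component of $f^{-1}(\overline{B_n})$ to be contained in $K_n$, yielding almost properness of $f = \pi \circ F$; and (iii) produces the standard divergence of arclength that implies weak completeness.

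The inductive step rests on three ingredients. The $\Cscr^1$-approximation in (i), together with the non-vanishing (iv), is provided by the Runge-type approximation theorem for null curves in $\C^2 \times \C^*$ from \cite{AlarconCastro-InfantesHidalgo2023}, which is the same engine underlying Theorem \ref{th:cantor} of this paper. The intrinsic metric control in (iii) is obtained by a labyrinth-style perturbation tailored to the pullback of the $\s_1^3$ metric under $\pi$, as in the weakly complete case already treated in \cite[Corollary 5.3]{AlarconCastro-InfantesHidalgo2023}. The outward push (ii) is produced by adding small exact null perturbations to $F_n$ along finitely many arcs joining $bK_n$ to $bK_{n+1}$ whose images under $\pi$ exit $\overline{B_{n+1}}$, in the spirit of the properness construction of \cite{AlarconForstneric2015MA} combined with the end-expansion technique of \cite{Forstneric2022RMI}.

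The main obstacle will be reconciling (ii) with (iv) on the collar $K_{n+1} \setminus K_n$: generic outward perturbations of $F_n$ need not preserve $F_3 \neq 0$, and any new zeros of $F_3$ would obstruct the descent through $\pi$ and create singular behavior on the $\CMC$ face side. I would handle this exactly as in \cite{AlarconCastro-InfantesHidalgo2023}: first perform the outward push on a thin Runge shell where $F_{n,3}$ is bounded below by an explicit quantity, and then apply the non-vanishing-preserving approximation on the remainder of $K_{n+1}$ both to kill the periods introduced by the perturbation and to extend the modified map to all of $K_{n+1}$ without creating zeros of the third coordinate. Passing to the $\Cscr^1_{\mathrm{loc}}$-limit produces the desired holomorphic null curve $F \colon \mathring M \to \C^2 \times \C^*$, whose Fujimori projection is a weakly complete almost proper $\CMC$ face into $\s_1^3$.
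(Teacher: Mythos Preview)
Your proposal has the right overall shape (inductive approximation on an exhaustion, keeping the third coordinate zero-free), but it misses the main new difficulty for the de Sitter target and the specific tool the paper introduces to overcome it.

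The projection $\pi_S \colon \SL \to \s_1^3$ is \emph{not} proper: its fibre is $\mathrm{SU}_{1,1}$, which is non-compact. Hence pushing a null curve far away in $\C^3$ (or in $\SL$) does not, by itself, push its $\pi_S$-image out of compact sets in $\s_1^3$. Your step (ii) asserts $f_n(bK_n)\cap\overline{B_n}=\varnothing$, but the mechanisms you invoke---Lemma~4 of \cite{AlarconForstneric2015MA} (here Lemma~\ref{lemma:ap}) and the end-expansion of \cite{Forstneric2022RMI}---only enlarge $\mgot(F_n)$ on boundary pieces, which suffices for $\h^3$ precisely because $\pi_H$ is proper. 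They give no control whatsoever on the $\s_1^3$-image. (Relatedly, ``geodesic balls'' in the Lorentzian manifold $\s_1^3$ are not a meaningful exhaustion; the paper works with the extrinsic Euclidean norm $|\psi|$ in $\R^4$.)

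The paper resolves this by identifying the explicit function $h$ in Theorem~\ref{th:bordered}\ref{properb} satisfying $|\psi|^2\ge h/2$, and by proving a new Riemann--Hilbert-type lemma (Lemma~\ref{lemma:apds}) which, unlike Lemma~\ref{lemma:ap}, increases $\mgot(X)$ on $b\overline M$ while simultaneously preserving the one-sided bound $|X_1+\igot X_2|>s_0>1$ on $b\overline M$. That extra invariant, carried through the induction as condition \ref{seq:raiz2} in the proof of Theorem~\ref{th:bordered}, is exactly what makes $h$ diverge on $bM_n$ in both cases of the dichotomy $\mgot(X^n)=\tfrac{1}{\sqrt 2}|X_1^n\pm\igot X_2^n|$; without it, $|X_1+\igot X_2|$ could hover near $1$ and $h$ need not grow. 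Your proposal contains neither the function $h$ nor this invariant, so step (ii) is unsupported. Two smaller points: weak completeness in the paper is not obtained by a labyrinth but comes for free from almost properness of $(1,\wt X_1,\wt X_2)/\wt X_3$, which makes $\Tcal\circ\wt X$ complete in $\SL$ and hence $\psi$ weakly complete by \cite{Yu97}; and your indexing in (i) controls $F_{n+1}-F_n$ only on $K_{n-1}$, so you cannot transfer (ii) to the limit map.
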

Recall that a continuous map $f:X\to Y$ between topological spaces is {\em almost proper} if for every compact set $K\subset Y$, the connected components of $f^{-1}(K)$ are all compact. 
Theorem \ref{th:bds} is the analogue to \cite[Corollary 3]{AlarconForstneric2015MA} for weakly complete almost proper $\CMC$ faces, and it is also proved in Section \ref{sec:HS} as a consequence of Theorem \ref{th:bordered}, another Runge approximation theorem for holomorphic null curves. For the latter, we first prove Lemma \ref{lemma:apds}, where we follow the ideas in \cite[Lemma 4]{AlarconForstneric2015MA} but with special care when applying the Riemann-Hilbert theorem for holomorphic null curves, see \cite[Theorem 6.4.2]{AlarconForstnericLopez2021Book}.

The almost proper $\CMC$ faces in Theorem \ref{th:bds} are weakly complete
by construction, because they are projections of complete holomorphic null curves in $\SL$, see  Section \ref{sec:HS} and \cite{Yu97}.
Since not every weakly complete $\CMC$ face is almost proper, Theorem \ref{th:bds} improves \cite[Corollary 5.3]{AlarconCastro-InfantesHidalgo2023} when the open Riemann surface is bordered.
We remark that the almost proper $\CMC$ faces in Theorem \ref{th:bds} are not {\em complete} in the sense of  \cite[Def. 1.2]{FRUYY09}, as this would imply that its ends are all conformally equivalent to a punctured disk, see \cite{MR2765562, MR2967008, MR3229091} and \cite[Fact 1.1]{FKKRUY13}.

Finally, as  a consequence of the Runge approximation Theorem \ref{th:cantor2}, we obtain the last main theorem of this paper, which gives the first known examples of almost proper $\CMC$ faces into $\s^3_1$ with Cantor ends.
\begin{theorem}\label{th:mainS}
	Let $M$ be a compact Riemann surface. There exists a Cantor set $C \subset M$ such that $M \setminus C$ admits	
	a weakly complete almost proper $\CMC$ face into $\s^3_1$.
\end{theorem}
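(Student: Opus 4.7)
The plan is to derive Theorem \ref{th:mainS} from Theorem \ref{th:cantor2} by post-composing an approximating holomorphic null curve with the Fujimori projection $\C^2\times\C^*\to\s^3_1$ described in Section \ref{sec:HS}, in exact analogy with how Theorem \ref{th:mainH} is obtained from Theorem \ref{th:cantor} via the Bryant projection, and how Theorem \ref{th:bds} is obtained from Theorem \ref{th:bordered}. The real work is packaged into the Runge approximation statement of Theorem \ref{th:cantor2}, whose formulation in $\C^2\times\C^*$ should be designed precisely so that the three required features of the output $\CMC$ face, namely weak completeness, almost properness, and the Cantor structure of the end set, can be read off from corresponding features of the lifted null curve.

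Granting Theorem \ref{th:cantor2}, I would proceed as follows. First, apply Theorem \ref{th:cantor2} to the compact Riemann surface $M$ to obtain a Cantor set $C\subset M$ and a holomorphic null immersion $X=(X_1,X_2,X_3)\colon M\setminus C\to\C^2\times\C^*$ which is complete as a null curve and almost proper into $\C^2\times\C^*$, with the divergence of $X$ near $C$ tuned to the geometry of the Fujimori fibration $\SL\cong\C^2\times\C^*\to\s^3_1$. Then define $f\colon M\setminus C\to\s^3_1$ by composing $X$ with this projection. By the holomorphic representation of \cite{Fujimori06} and its characterization in \cite{FKKRUY13}, $f$ is a $\CMC$ face, and by \cite{Yu97} the completeness of $X$ in $\SL$ forces $f$ to be weakly complete in the sense of \cite[Def.~1.3]{FRUYY09}. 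Almost properness of $f$ then follows by exhausting $\s^3_1$ by compacts $L_n$ and observing that, because $X$ is almost proper in $\C^2\times\C^*$ and the divergence of $X$ near $C$ has been tuned to the Fujimori fibration, the connected components of $f^{-1}(L_n)\subset M\setminus C$ are all relatively compact in $M$.

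The main obstacle, which must already be dealt with inside the proof of Theorem \ref{th:cantor2}, is the simultaneous achievement of the Cantor-end condition and almost properness. An inductive Runge--Mergelyan scheme employing the Riemann--Hilbert technique for null curves \cite[Theorem 6.4.2]{AlarconForstnericLopez2021Book} in the spirit of Lemma \ref{lemma:apds} and of \cite{AlarconCastro-InfantesHidalgo2023} must at each step remove a carefully chosen family of small closed disks from the surface, so that the limiting end set is totally disconnected and without isolated points, while at the same time pushing the images of the newly created boundary curves into prescribed far regions of $\C^2\times\C^*$ that are compatible with the Fujimori projection, so that almost properness survives the projection to $\s^3_1$. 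Maintaining the non-vanishing of the third component $X_3$ throughout this Cantor-end construction, which is what allows the null curve to land in $\C^2\times\C^*$ rather than only $\C^3$, is where the bulk of the technical effort must lie.
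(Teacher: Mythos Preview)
Your overall plan matches the paper's: apply Theorem \ref{th:cantor2} to produce a Cantor set $C\subset\mathring\Omega_0$ and a null curve $\wt X\colon M\setminus C\to\C^2\times\C^*$, then set $\psi=\pi_S\circ\Tcal\circ\wt X$ and read off the conclusions. Two points, however, are glossed over and would not go through as written.

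First, a holomorphic null curve in $\SL$ does not automatically project under $\pi_S$ to a $\CMC$ face; one needs condition \eqref{eq:condps}, i.e.\ the secondary Gauss map $g$ in \eqref{eq:defg} must satisfy $|g|\not\equiv 1$. The paper secures this by starting from an initial curve $X$ on $M\setminus\mathring\Omega_0$ with $|g(p)|\neq 1$ at some point $p$, and then invoking the jet-interpolation variant of Theorem \ref{th:cantor2} so that $\wt X-X$ vanishes to order $1$ at $p$, thereby preserving $g(p)$. Your sentence ``by the holomorphic representation \dots\ $f$ is a $\CMC$ face'' hides this step. Second, and more seriously, your almost-properness argument does not work as stated: unlike $\pi_H$, the projection $\pi_S$ has noncompact fibres $\mathrm{SU}_{1,1}$, so almost properness of $\wt X$ in $\C^2\times\C^*$ (or in $\SL$) does \emph{not} imply almost properness of $\psi$, and preimages $\pi_S^{-1}(L_n)$ are never compact. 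What Theorem \ref{th:cantor2} actually delivers in conclusion \ref{almostproper} is almost properness of the tailor-made scalar function $h$, and the paper then verifies the pointwise inequality $|\psi|^2\ge h/2$ to transfer almost properness directly to $\psi$. Your phrase ``tuned to the Fujimori fibration'' gestures in this direction, but the concrete content is precisely the choice of $h$ and this inequality; without them the deduction fails.
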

Theorem \ref{th:mainS} also holds when $M$ is a bordered Riemann surface, see Remark \ref{rmk:bd} and the proof of Theorem \ref{th:mainS} in Section \ref{sec:HS}.
The question of whether  Theorem \ref{th:bds} and Theorem \ref{th:mainS} hold for proper $\CMC$ faces into $\s^3_1$ remains open.


\section{Preliminaries} \label{sec:prel}

\subsection{Notation and definitions}
We shall denote $\igot = \sqrt{-1}\in\C$, $\R^+ = [0 , + \infty)$, $\n = \{1,2,3,4,\hdots\}$, 
and by $| \cdot |$, $\dist ( \cdot, \cdot)$, and $\length(\cdot)$ the Euclidean norm, distance, and length on $\C^n$ $(n \in \n)$, respectively. We recall the following.

\begin{definition}{(\cite[Def 1.10.8]{AlarconForstnericLopez2021Book})}\label{def:bordered}
A {\em bordered Riemann surface} is an open Riemann surface which is the interior of a compact one dimensional complex manifold $\overline{M}$ with smooth boundary $b \overline{M}  \not = \varnothing$ consisting of finitely many closed Jordan curves. 
\end{definition}
A domain $D$ in an open Riemann surface $M$ is said to be a {\em bordered domain} if it is relatively compact and has smooth boundary; such $D$ is itself a bordered Riemann surface with the complex structure induced from $M$. It is classical that every bordered Riemann surface is biholomorphic to a relatively compact bordered domain in a larger Riemann surface.

Given a set $K$ in a complex manifold we shall say that a map $K \to \C^n$ ($n\in\n)$ is holomorphic if it extends holomorphically to an open neighborhood of $K$. In the same way, we will say that a map $K \to \C^n$ is a holomorphic null curve if it extends to a holomorphic null curve on an open neighborhood of $K$. If $K$ is compact we denote by $\Ascr^r(K)$ ($r\in\n \cup \{0\}$) the space of all $\Cscr^r$ functions $K\to\C$ which are holomorphic in the interior $\mathring K$ of $K$, and write $\Ascr(K)$ for $\Ascr^0 (K)$. Likewise, we define the space $\Ascr^r(K,N)$  for maps into a complex manifold $N$; nevertheless we shall omit the target from the notation when it is clear from the context.
We will also say that a map $K\to\C^n$ is a null curve of class $\Ascr^r (K)$ if it is a  $\Cscr^r$ map which is a holomorphic null curve in $\mathring K$.
For a $\Cscr^r$ map $f\colon K\to \C^n$, we denote by $\| f \|_{r,K}$ the $\Cscr^r$ maximum norm of $f$ on $K$.

The following definitions, see \cite[Def.\ 1.12.9 and 3.1.3]{AlarconForstnericLopez2021Book}, will play a central role in the construction of holomorphic null curves in Section \ref{sec:proper}.

\begin{definition}\label{def:admissible}
	An \emph{admissible set} in a smooth surface $M$ is a compact set of the form $S = K \cup \Gamma \subset M $, where $K$ is a (possibly empty) finite union of pairwise disjoint compact domains with piecewise $\Cscr^1$ boundaries in $M$, and $\Gamma= S \setminus \mathring K =\overline{S\setminus K}$ is a (possibly empty) finite union of pairwise disjoint smooth Jordan arcs and closed Jordan curves meeting $K$ only at their endpoints (if at all) and such that their intersections with the boundary $bK$ of $K$ are transverse.
\end{definition}
\begin{definition}\label{def:gnc}
	Let $S = K \cup \Gamma$ be an admissible set in a Riemann surface $M$, and $\theta$  be a nowhere vanishing holomorphic $1$-form on a neighborhood of $S$. A \emph{generalized null curve} $S \to \C^n$ $(n \geq 3)$ of class $\Ascr^r (S)$ $(r \in \n)$ is a pair $(X,f \theta$) with $X \in \Ascr^r ( S, \C^n)$ and $f \in \Ascr^{r-1} ( S, \mathbf{A}_*)$ (see \eqref{def:nullquadric}) such that $f \theta = dX $ holds on $K$ (hence $X \colon \mathring K \to \C^n$ is a holomorphic null curve), and for a smooth path $\alpha$ in $M$ parameterizing a connected component of $\Gamma$ we
	have $ \alpha^* (f \theta) = \alpha^* (d X)= d ( X \circ \alpha)$.
\end{definition}
Finally, recall that a compact subset $K$ of an open Riemann surface $M$ is said to be \emph{Runge} if every function in $\Ascr (K)$ may be approximated uniformly on $K$ by holomorphic functions defined on $M$. By the Runge-Mergelyan theorem  \cite[Theorem 5]{FFW18} this is equivalent to $M\setminus K$ having no relatively compact connected components in $M$ (i.e., $K$ has no holes). 
A map $f\colon M \to \C^n$ is {\em flat} if $f(M)$ is contained in an affine complex line in $\C^n$, and {\em nonflat} otherwise.

	\subsection{Technical lemmas}
	We shall comprise in this subsection the main technical tools for the proofs in Section \ref{sec:proper}.
	Let $M$ be an open Riemann surface, $K \subset M$ be a subset, and $f =(f_1,f_2,f_3)\colon K \to \C^3$ be a map.  In \cite{AlarconForstneric2015MA}, Alarcón and Forstneri\v c introduced the function $	\mgot (f) \colon K \to \R^+$ given by
	\begin{equation}\label{eq:m}
		\mgot (f) = \frac{1}{\sqrt{2}}\max \{ | f_1 + \igot f_2 | ,| f_1 - \igot f_2|\}.
	\end{equation}
	Note that $\mathfrak{m}(f) \leq |(f_1,f_2)| \le | f |$ on $K$ by Cauchy-Schwarz inequality. 
	The following is \cite[Lemma 3.2]{AlarconCastro-InfantesHidalgo2023}. 
	\begin{lemma}\label{lemma:ap}
		Let $M= \overline M\setminus b\overline M$ be a bordered Riemann surface,  $K\subset M$ be a smoothly bounded compact domain,  $\Lambda \subset M$ be a finite set,  $s_0 >0$ be a number, and  $X = (X_1, X_2, X_3) \colon \overline M \to \C^3$  be a null curve of class $\Ascr^1(\overline M)$. Assume that
			$\mathfrak{m}(X) > s_0$ on $\overline M \setminus \mathring K$.
		Given numbers $\varepsilon>0$, $ s > s_0$, and $k \in \n$, there exists a null curve $\wh X  = ( \wh X_1, \wh X_2, \wh X_3) \colon \overline M \to \C^3$ of class  $\Ascr^1(\overline M)$ satisfying the following:
		\begin{enumerate}[label=\rm (\roman*)]
			\item \label{L1} $\| \wh X - X \|_{1, K} < \varepsilon$,
			\item \label{L2} $\mathfrak{m} (\wh X) > s $ on $b\overline M$,
			\item  \label{L3} $\mathfrak{m}(\wh X) > s_0$ on $\overline M \setminus \mathring K$,
			\item \label{L4} $ \| \wh X_3 - X_3 \|_{0, \overline M } < \varepsilon$,
			\item  \label{L5} $\wh X - X $ vanishes to order $k$ everywhere on $\Lambda$.
		\end{enumerate}
	\end{lemma}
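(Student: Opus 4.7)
The plan is to modify $X$ only near the boundary $b\overline M$, stretching $\mathfrak{m}(X)$ above $s$ there while altering $X_3$ by at most $\varepsilon$ globally. The strategy is modelled on \cite[Lemma 4]{AlarconForstneric2015MA}, with the extra feature that the third coordinate must be controlled uniformly on all of $\overline M$ and that interpolation of order $k$ at $\Lambda$ is required. The geometric observation that makes \ref{L2} and \ref{L4} compatible is that the null quadric $\Agot$ from \eqref{def:nullquadric} contains the two complex lines $\C(1,\pm\igot,0)$, which are precisely the null directions with vanishing third coordinate; consequently, a null perturbation whose derivative stays near these two lines changes $X_1\pm\igot X_2$ (and hence $\mathfrak{m}(X)$) while keeping $dX_3$ negligibly small.

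To realise this, for every boundary component $\gamma\subset b\overline M$ I would build a continuous family of small null disks $\{\Delta_p\}_{p\in\gamma}$ attached along $\gamma$, each $\Delta_p$ starting at $X(p)$, ending at a point $X(p)+v_p$ with $\mathfrak{m}(X(p)+v_p)>s$, having total third-coordinate variation at most $\varepsilon$, and depending smoothly on $p$; the hypothesis $\mathfrak{m}(X)>s_0$ on $\overline M\setminus\mathring K$ lets us choose these disks entirely in the region where $\mathfrak{m}>s_0$, which prepares \ref{L3}. The Riemann--Hilbert theorem for null curves, in the form of \cite[Theorem 6.4.2]{AlarconForstnericLopez2021Book}, then produces a generalised null curve $\wt X$ on $\overline M$ of class $\Ascr^1(\overline M)$, close to $X$ on $K$ and with boundary values realising this deformation, giving \ref{L2}.

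To upgrade $\wt X$ to an actual null curve satisfying \ref{L1} and \ref{L5}, I would combine the Riemann--Hilbert step with a period-dominating holomorphic spray based at $X$, whose parameter functions vanish to order $k$ at every point of $\Lambda$ and are supported in $\mathring K\setminus\Lambda$. Killing the periods by the implicit function theorem provides the required $\wh X$, and \ref{L3} is preserved because the interior perturbation is small while $\mathfrak{m}(X)$ was already bounded below by $s_0$ there. The main difficulty is \ref{L4}: the perturbation is not small on $b\overline M$, so the uniform control of $\wh X_3 - X_3$ on all of $\overline M$ must come from the fact that the Riemann--Hilbert disks are, to first order, tangent to the two null lines $\C(1,\pm\igot,0)$. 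Arranging this consistently along each boundary curve, especially at points where the choice between $(1,\igot,0)$ and $(1,-\igot,0)$ must switch, without introducing period obstructions that the spray cannot absorb, is the technical heart of the proof.
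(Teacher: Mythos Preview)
The paper does not give a proof of this lemma: it is quoted verbatim as \cite[Lemma 3.2]{AlarconCastro-InfantesHidalgo2023}, so there is no in-paper argument to compare your proposal to. Your outline is nonetheless on the right track, and it is close in spirit to the proof the paper \emph{does} give of the companion Lemma~\ref{lemma:apds}, which follows \cite[Lemma~4]{AlarconForstneric2015MA}.

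Two points are worth flagging. First, the Riemann--Hilbert theorem you invoke, \cite[Theorem~6.4.2]{AlarconForstnericLopez2021Book}, already outputs a genuine null curve of class $\Ascr^1(\overline M)$ together with jet interpolation at a prescribed finite set; so your separate ``period-dominating spray supported in $\mathring K\setminus\Lambda$'' step is unnecessary and, as you describe it, would not clearly keep \ref{L3} (the perturbation of a spray supported in $\mathring K$ need not vanish on $\overline M\setminus\mathring K$ after integration). Jet interpolation at $\Lambda$ (and at $X_3^{-1}(0)$ when needed) is obtained directly from the Riemann--Hilbert step. Second, the ``switching'' difficulty you highlight is exactly what the paper's approach avoids: in the proof of Lemma~\ref{lemma:apds} one first uses the method of \emph{exposing boundary points} \cite[Theorem~6.7.1]{AlarconForstnericLopez2021Book} to push $|\langle X,V_1\rangle|$ above the target near a single boundary point, and then applies Riemann--Hilbert with the \emph{single} null direction $V_2=\tfrac{1}{\sqrt 2}(1,-\igot,0)$ on a complementary arc. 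Since $\langle V_1,V_2\rangle=0$, the $V_1$-component is untouched on that arc, so \ref{L2} and \ref{L3} are read off from one or the other vector at each boundary point without any transition issues, and \ref{L4} holds because both $V_1$ and $V_2$ have zero third coordinate. Incorporating the exposing step and dropping the spray would make your sketch match the strategy actually used in the cited proof.
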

This lemma, together with the projection $\C^2 \times \C^* \to \h^3$, was used to construct proper conformal $\CMC$ immersions  $M\to\h^3$ when $M$ is a bordered Riemann surface, see \cite[Corollary 3]{AlarconForstneric2015MA}; and also to construct almost proper conformal $\CMC$ immersions $M\to \h^3$ when $M$ is an open Riemann surface, see \cite[Corollary 5.2]{AlarconCastro-InfantesHidalgo2023}.
The following lemma is a modification of Lemma \ref{lemma:ap}, replacing conclusion \ref{L3} by a suitable one to construct almost proper $\CMC$ faces in $\s_1^3$.

	\begin{lemma}\label{lemma:apds} 
		Let $M= \overline M\setminus b\overline M$ be a bordered Riemann surface,  $K\subset M$ be a smoothly bounded compact subset,  $\Lambda \subset M$ be a finite set,  $s > s_0 >0$ be numbers, and  $X = (X_1, X_2, X_3) \colon \overline M \to \C^3$  be a null curve of class  $\Ascr^1(\overline M)$ such that
		\begin{equation}\label{eq:boundary}
			\left|  X_1 + \igot X_2   \right| > s_0   \text{ on } b\overline{M}.
		\end{equation}
		Given numbers $\varepsilon>0$ 
		and $k \in \n$, there exists a  null curve $\wh X  = ( \wh X_1, \wh X_2, \wh X_3) \colon \overline M \to \C^3$ of class  $\Ascr^1(\overline M)$
		satisfying the following:
		\begin{enumerate}[label=\rm (\roman*)]
			\item \label{1} $\| \wh X - X \|_{1, K} < \varepsilon$,
			\item \label{2} $\mgot(\wh X)>s$ on $b\overline{M}$,
			\item \label{5} $|  \wh X_1 + \igot \wh X_2   |  > s_0    \text{ on } b\overline{M}$,
			\item  \label{3} $ \| \wh X_3 - X_3 \|_{0, \overline M } < \varepsilon$,
			\item \label{4} $\wh X - X $ vanishes to order $k$ everywhere on $\Lambda$.
		\end{enumerate}
	\end{lemma}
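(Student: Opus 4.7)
My plan is to adapt the proof of Lemma \ref{lemma:ap} (equivalently, \cite[Lemma 4]{AlarconForstneric2015MA}) by applying the Riemann-Hilbert theorem for holomorphic null curves \cite[Theorem 6.4.2]{AlarconForstnericLopez2021Book} with a carefully chosen family of null disks along $b\overline M$. The key observation is that the null vector $\ngot=(\igot,1,0)\in\Agot_*$ satisfies
\[
\ngot_1+\igot\ngot_2=2\igot\neq 0,\qquad \ngot_1-\igot\ngot_2=0,\qquad \ngot_3=0.
\]
Consequently, the linear null disks $\phi_x(\zeta)=X(x)+r\,\zeta\,\ngot$ (with $r>0$ to be chosen) push $X_1+\igot X_2$ by an increment of size $\sim r$ while leaving $X_1-\igot X_2$ and, crucially, $X_3$ unchanged. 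This is precisely the push direction that will simultaneously yield \ref{2} and \ref{5} without disturbing the third component, i.e.\ \ref{3}.

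First I would use compactness of $b\overline M$ together with the open hypothesis \eqref{eq:boundary} to choose a collar neighborhood $V$ of $b\overline M$ with $V\cap K=\varnothing$ (which is automatic since $K\subset M$) and $|X_1+\igot X_2|>s_0$ on $V$. Choosing $r>\sqrt{2}\,s$, I would split $b\overline M$ into finitely many short boundary arcs and apply the Riemann-Hilbert theorem for null curves with the smooth family $\{\phi_x\}_{x\in b\overline M}$. This produces a holomorphic null curve $\wt X\in\Ascr^1(\overline M)$ which is $\Cscr^1$-close to $X$ on $\overline M\setminus V$ (hence on $K$, giving \ref{1}), is close on $b\overline M$ to $\phi_x(\zeta(x))$ for some $|\zeta(x)|$ close to $1$, and whose third component is uniformly close to $X_3$ on all of $\overline M$ because the disks $\phi_x$ have constant third component $X_3(x)$. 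The estimate $|\wt X_1+\igot \wt X_2|(x)\gtrsim 2r|\zeta(x)|>\sqrt{2}\,s$ on $b\overline M$ then yields \ref{2}, which a fortiori gives \ref{5} since $\sqrt{2}\,s>s_0$; and the third-component control gives \ref{3}.

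To incorporate the jet interpolation \ref{4}, I would apply the standard trick of multiplying the Riemann-Hilbert displacement by a holomorphic function $g\in\Ascr(\overline M)$ vanishing to order $k$ on $\Lambda$ with $|g|\approx 1$ on $b\overline M$ (produced by Weierstrass/Runge interpolation, using that $\Lambda$ is finite and disjoint from $b\overline M$). Equivalently, after producing $\wt X$ one corrects it by a small null curve on $\overline M$, provided by the approximation-interpolation result for null curves in $\Agot$, which absorbs the deviation of $\wt X-X$ at the prescribed $k$-jets on $\Lambda$; this correction can be kept arbitrarily small in $\Cscr^1$-norm on $\overline M$, so it preserves \ref{1}--\ref{3} after a harmless redefinition of $\varepsilon$ and $s$.

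The main obstacle is the joint compatibility of the four competing controls: the boundary push in $|X_1+\igot X_2|$ must be large (size $>\sqrt{2}\,s$), while $\wt X$ must stay $\Cscr^1$-close to $X$ on $K$ and the third component must stay $\Cscr^0$-close to $X_3$ on all of $\overline M$. The choice of the null direction $\ngot=(\igot,1,0)$ is what makes this possible: its third coordinate vanishes (so the $X_3$-variation is only of error type, not jump type), its $(+)$-combination is nonzero (so the push is effective for \ref{2}--\ref{5}), and the approximation is confined to a thin collar disjoint from $K$ (so \ref{1} survives). The Riemann-Hilbert step must be applied with special care to preserve these features simultaneously, as indicated in the paragraph preceding the statement.
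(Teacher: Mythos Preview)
Your approach is genuinely different from the paper's and, with one correction, is actually simpler. The paper first \emph{exposes} a boundary point (via \cite[Theorem 6.7.1]{AlarconForstnericLopez2021Book}) so that $|X_1+\igot X_2|>\sqrt{2}\,s$ on a subarc of $b\overline M$ around that point, and then applies the Riemann--Hilbert theorem with disks in the \emph{other} null direction, the one that annihilates $X_1+\igot X_2$ (i.e.\ is Hermitian-orthogonal to your $\ngot$). Because that push leaves $X_1+\igot X_2$ untouched, property~\ref{5} comes for free, while~\ref{2} holds near the exposed point by the large $|X_1+\igot X_2|$ and on the pushed arc by the large $|X_1-\igot X_2|$. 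Your choice $\ngot=(\igot,1,0)$ does the opposite---it moves $X_1+\igot X_2$ and fixes $X_1-\igot X_2$---so you can push uniformly over all of $b\overline M$, dispense with the exposing step, and get~\ref{2} and~\ref{5} from a single estimate. This is a legitimate shortcut; note it does not even use the hypothesis~\eqref{eq:boundary}.

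However, your key inequality is wrong as written. The Riemann--Hilbert output $\wh X(x)$ is only guaranteed to lie near the \emph{circle} $\{(X_1+\igot X_2)(x)+2\igot r\zeta:\zeta\in b\D\}$, whose minimum modulus is $\bigl|\,2r-|(X_1+\igot X_2)(x)|\,\bigr|$, not $\gtrsim 2r|\zeta(x)|$; if $|(X_1+\igot X_2)(x)|$ happens to be close to $2r$ at some boundary point, this circle passes near $0$ and both~\ref{2} and~\ref{5} fail. Thus the condition $r>\sqrt{2}\,s$ is insufficient. The fix is immediate: take $2r>\sqrt{2}\,s+\max_{b\overline M}|X_1+\igot X_2|$, so the circle stays outside $\{|w|\le\sqrt{2}\,s\}$. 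For~\ref{4}, the ``multiply the displacement by $g$'' trick does not respect the null constraint in any obvious way; since the Riemann--Hilbert theorem for null curves already comes with jet interpolation (this is what the paper invokes), you should simply apply that version directly rather than correcting afterwards.
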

	\begin{proof}
		The proof follows the ideas of \cite[Lemma 4]{AlarconForstneric2015MA}, but applied differently.
		Through the proof, we will denote $V_1=\frac{1}{\sqrt2}(1,i,0)$ and $V_2=\frac{1}{\sqrt2}(1,-i,0)$ (note that $V_1$ and $V_2$ are orthonormal). Hence, for a map $f$ we have that
		\begin{equation*}
			\label{mV} \mgot(f)=\max\{| \langle f,V_1\rangle|,|\langle f,V_2\rangle|\}, 
		\end{equation*}
		recall \eqref{eq:m}.
		%
		We assume without loss of generality that $\overline{M}$ is a relatively compact bordered domain in an open Riemann surface $\wh M$ such that $\overline M$ is a Runge subset of $\wh M$. By the Runge-Mergelyan theorem 	\cite[Theorem 3.6.2]{AlarconForstnericLopez2021Book}, we may assume that $X$ extends holomorphically to $\wh M$.
		%
		We also assume that $ b \overline M$ has only one connected component. In the general case we apply the same reasoning to each component. 

		By a general position argument \cite[Theorem 3.4.1 (a)]{AlarconForstnericLopez2021Book} we may assume there is a point $q\in b\overline M$ such that $X_3(q)\neq 0$. We choose a compact smooth embedded arc $\lambda\subset \C^3$ such that
		\begin{enumerate}[label= \rm (\alph*$1$)]
			\item \label{disj} one endpoint of $\lambda$ is $X(q)$ and $\lambda$ is otherwise disjoint from $X(\overline M)$, 
			\item \label{l1} $| \langle z,V_1 \rangle |>s_0/ \sqrt{2} $ for any $z\in\lambda$, see \eqref{eq:boundary},
			\item \label{end} the other endpoint of $\lambda$ is denoted $v\in \C^3$, and $|\langle v,V_1\rangle |>s$,
			\item \label{l3}  $ |  \langle z,(0,0,1)\rangle -X_3(q)|<\varepsilon/2$ for any $z\in \lambda$.
		\end{enumerate}
		Up to slightly modifying the arc $\lambda$, we may apply the method of exposing boundary points, see \cite[Theorem 6.7.1]{AlarconForstnericLopez2021Book} (see also \cite{ForstnericWold2009}), in order to approximate $X$ in the $\Cscr^1$ topology outside a small open neighborhood of $q$ by a null curve $X^0=(X^0_1,X^0_2,X^0_3)\colon \overline M\to \C^3$ of class $\Ascr^1(\overline M)$ with $X^0(q)=v$ such that
		\begin{enumerate}[label= \rm (\alph*$2$)]
			\item  \label{x0ap} $\|X^0- X\|_{1,K}<\varepsilon /2$,
			\item \label{x01} $| \langle X^0,V_1 \rangle |> s_0 / \sqrt{2}$ on $b\overline M$, recall that $s>s_0$ and see \ref{l1} and \eqref{eq:boundary},
			\item  \label{x0v1} $|\langle X^0(q),V_1\rangle|>s$, see \ref{end},
			\item \label{x03} $\|X^0_3-X_3\|_{0,\overline{M}}<\varepsilon/2$, see \ref{l3},
			\item \label{x0i} $X^0-X$ vanishes to order $k$ everywhere on $\Lambda$. 
		\end{enumerate} 
		By \ref{x0v1} and the continuity of $X^0$ there is a compact subarc $\alpha\subset b\overline M\setminus \{q\}$ with
		\begin{equation}\label{eq:C-a}
			|\langle X^0,V_1\rangle|>s\quad \text{ on $\quad {b\overline M\setminus \text{int}(\alpha)}$},
		\end{equation}
		where, from now on, $\text{int}(\alpha)$ means the relative interior of the curve $\alpha$ in $b\overline M$.
		Thus we may consider another compact subarc $\beta\subset b\overline M\setminus\{q\}$ such that $\alpha\subset \text{int}(\beta)$,
		a smooth function $\mu\colon b\overline M\to \R^+$ satisfying
		\begin{equation}\label{mu}
			\mu =0 \quad \text{ on $\quad {b\overline M\setminus \beta}$},
		\end{equation}
		and a map $\vartheta\colon b\overline M\times \overline\D\to \C^3$ given by
		\begin{equation}\label{eq:vt}
			\vartheta(p,\xi)=\begin{cases}
				X^0(p), & p\in b\overline M\setminus \beta,\\
				X^0(p)+\mu(p)\xi V_2, & p\in \beta,
			\end{cases}
		\end{equation}
		such that
		\begin{enumerate}[label= \rm (\alph*$3$)]
			\item \label{vts} $|\langle \vartheta(p,\xi),V_1\rangle|>s $ for all $p\in {b\overline M\setminus\text{int}(\alpha)}$ and $\xi\in b\D$, 
			\item \label{vt1} $|\langle \vartheta(p,\xi),V_1\rangle|>s_0/\sqrt{2}$ for all $p\in b\overline M$ and $\xi\in b\D$, 
			\item \label{vtV2} $|\langle\vartheta(p,\xi),V_2\rangle|> s $ 
			for all $p\in \alpha$ and $\xi\in b\D$.
		\end{enumerate}
		For \ref{vts} take into account \eqref{eq:C-a} and that $\langle V_1,V_2\rangle=0$; and for \ref{vt1} use \ref{x01} and $\langle V_1,V_2\rangle=0$.
		Notice that \eqref{mu} implies the continuity of $\vartheta$ and that \ref{vtV2} follows from choosing $\mu$ big enough in $\alpha$.
		Now we apply the Riemann-Hilbert theorem with jet interpolation \cite[Theorem 6.4.2]{AlarconForstnericLopez2021Book} to obtain a null curve $\wh X=(\wh X_1,\wh X_2,\wh X_3)\colon \overline M \to\C^3$ of class $\Ascr^1 (\overline M)$ such that
		\begin{enumerate}[label= \rm (\alph*$4$)]
			\item \label{hXa} $\| \wh X-X^0\|_{1,K}<\varepsilon/2$,
			\item \label{hXd} $\dist (\wh X(p),\vartheta(p,b\D))<\varepsilon$ for all $p\in b\overline M$,
			\item \label{hX3} $\|\wh X_3-X^0_3\|_{0,\overline{M}}<\varepsilon/2$, see \cite[Theorem 6.4.2, \rm ii)--iii)]{AlarconForstnericLopez2021Book},
			\item \label{hXi} $\wh X - X^0$ vanishes to order $k$ everywhere on $\Lambda$.
		\end{enumerate}
		The null curve $\wh X$ satisfies the conclusions of the lemma, provided that $\varepsilon$ is chosen small enough. Indeed,
		\ref{1} follows from \ref{x0ap} and \ref{hXa}.
		Property \ref{2} is a consequence of  \ref{vts}, \ref{vtV2}, and \ref{hXd}.
		We obtain \ref{5} from the definition of $V_1$, \ref{vt1},  and \ref{hXd}.
		Conditions  \ref{x03} and \ref{hX3} imply \ref{3}.
		Finally, \ref{4} is given by \ref{x0i} and \ref{hXi}.
	\end{proof}

	\section{Proper maps in $\mathbb{C}^2\times\mathbb{C}^*$}\label{sec:proper}
	This section is devoted to prove different Runge approximation theorems for holomorphic null curves in $\C^3$, with control on the zero set of the third component function and additionally ensuring that certain holomorphic maps, depending on the null curves, are proper or almost proper.  
	\begin{theorem}\label{th:cantor}
		Let $M$ be a compact Riemann surface, $\Omega_0$ a smoothly bounded compact connected convex domain in a holomorphic coordinate chart on $M$, and $X = (X_1, X_2, X_3) \colon M \setminus \mathring{\Omega}_0 \to \C^3$ a  holomorphic null curve.
		For any $\varepsilon >0$ there exists a Cantor set $C \subset \mathring\Omega_0$ and an injective nonflat holomorphic null curve $\wt X = (\wt X_1, \wt X_2, \wt X_3) \colon M \setminus C \to \C^3$ such that 
		\begin{enumerate} [label={\rm (\alph*)}]
			
			\item \label{aprox} $|| \wt X - X ||_{1, M\setminus \mathring{\Omega}_0} < \varepsilon$,
			
			\item \label{zeros} $\wt X_3^{-1} (0) = X_3^{-1} (0)$,
			
			\item \label{proper} $(\wt X_1, \wt X_2) \colon M \setminus C  \to \C^2$ and $(1, \wt X_1, \wt X_2)/ \wt X_3 \colon M \setminus (C \cup X_3^{-1} (0)) \to \C^3$ are proper maps. 
			\newcounter{Mayus1}\setcounter{Mayus1}{\value{enumi}}
		\end{enumerate}
	\end{theorem}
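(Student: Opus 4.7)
The plan is to combine a Cantor-type nested exhaustion with iterative applications of Lemma \ref{lemma:ap}, in the spirit of the Cantor-end constructions of \cite{AlarconForstneric2015MA, Forstneric2022RMI}. First I would construct the Cantor set as a nested intersection $C=\bigcap_{n\ge 0}\Omega_n$, where each $\Omega_n\subset\mathring\Omega_{n-1}$ is a disjoint union of $2^n$ smoothly bounded convex disks in the chart around $\Omega_0$, each component of $\Omega_{n-1}$ contains exactly two components of $\Omega_n$ in its interior, and the maximum diameter of the components of $\Omega_n$ tends to $0$. Setting $K_n:=M\setminus\mathring\Omega_n$, one obtains compact bordered domains $K_0\subset K_1\subset\cdots$ exhausting $M\setminus C$.

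Next I would inductively construct holomorphic null curves $X^{(n)}\in\Ascr^1(K_n)$, thresholds $s_n\uparrow\infty$, and errors $\varepsilon_n>0$ with $\sum\varepsilon_n<\varepsilon$, such that $X^{(0)}=X$ and for every $n\ge 1$:
\begin{enumerate}[label=\rm(\roman*)]
\item $\|X^{(n)}-X^{(n-1)}\|_{1,K_{n-1}}<\varepsilon_n$ and $\|X^{(n)}_3-X^{(n-1)}_3\|_{0,K_n}<\varepsilon_n$;
\item $\mathfrak{m}(X^{(n)})>s_n$ on $bK_n$ and $\mathfrak{m}(X^{(n)})>s_{n-1}$ on $K_n\setminus\mathring K_{n-1}$;
\item $X^{(n)}-X^{(n-1)}$ vanishes to order $n$ at every point of $X_3^{-1}(0)$;
\item $X_3^{(n)}$ is nowhere zero on $K_n\setminus K_0$.
\end{enumerate}
The inductive step has two parts. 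First, extend $X^{(n-1)}$ from $K_{n-1}$ to a holomorphic null curve on a neighborhood of $K_n$ via the Runge--Mergelyan theorem for null curves on an admissible set $S_{n-1}=K_{n-1}\cup\Gamma_{n-1}$, where $\Gamma_{n-1}$ is a finite collection of smooth arcs in $\overline{\Omega_{n-1}\setminus\mathring\Omega_n}$ chosen so that $S_{n-1}$ is Runge in a neighborhood of $K_n$; the generalized null curve on $\Gamma_{n-1}$ is prescribed so that $|f_1\pm\igot f_2|>\sqrt 2\, s_{n-1}$ and $f_3\ne 0$ along $\Gamma_{n-1}$. Second, apply Lemma \ref{lemma:ap} to the extension thus obtained, with $K=K_{n-1}$, $\Lambda=X_3^{-1}(0)$, $s_0=s_{n-1}$, $s=s_n$, and $k=n$, to produce $X^{(n)}$ satisfying (i)--(iv), using a small general position perturbation if necessary to secure~(iv).

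By~(i) the sequence $X^{(n)}$ converges uniformly in $\mathscr{C}^1$ on compact subsets of $M\setminus C$ to a holomorphic null curve $\wt X$; conclusion~\ref{aprox} follows from~(i) and $\sum\varepsilon_n<\varepsilon$. Conclusion~\ref{zeros} follows from the high-order interpolation~(iii) and Hurwitz's theorem, together with~(iv) keeping $\wt X_3$ nowhere zero on $(M\setminus C)\setminus K_0$. For~\ref{proper}, any divergent sequence in $M\setminus C$ must accumulate on $C$ by compactness of $M$; such a point eventually lies in $K_{m+1}\setminus\mathring K_m$ for $m\to\infty$, and passing~(ii) to the limit yields $\mathfrak{m}(\wt X)\ge s_m-o(1)$ there, so $|(\wt X_1,\wt X_2)|\ge\mathfrak{m}(\wt X)\to\infty$ and $(\wt X_1,\wt X_2)$ is proper. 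Near $X_3^{-1}(0)$ the coordinate $1/\wt X_3$ blows up, while near $C$ the component $\wt X_3$ remains bounded (by~(i) telescoped from $\|X_3\|_{0,K_0}$), so $|(\wt X_1,\wt X_2)/\wt X_3|\to\infty$ and the second map is proper too.

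The main obstacle is the extension step: one must arrange the generalized null curve on $\Gamma_{n-1}$ so as to simultaneously satisfy the $\mathfrak{m}>s_{n-1}$ condition on the newly added annular region, keep the sup-norm of the third component controlled (so that $\wt X_3$ stays bounded on $M\setminus C$), and preserve its nonvanishing on $K_n\setminus K_0$, all while remaining compatible with the jet interpolation on $X_3^{-1}(0)$ required by Lemma~\ref{lemma:ap}. Coordinating the nonvanishing condition~(iv) with the interpolation~(iii) through a careful general position argument at each step is the most delicate point, and is what distinguishes this construction from the merely complete version established in \cite{AlarconCastro-InfantesHidalgo2023}.
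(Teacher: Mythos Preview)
Your approach is essentially the same as the paper's: build the Cantor set as a nested intersection, exhaust $M\setminus C$ by the complements $K_n$, extend the null curve across the newly added collar via a generalized null curve on arcs and the Runge--Mergelyan theorem for null curves, and then apply Lemma~\ref{lemma:ap} to push $\mathfrak m$ up at the new boundary while keeping $X_3$ under control; the paper uses a $4^n$-component splitting rather than your $2^n$, but this is cosmetic.

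Two points in your sketch need tightening to match what the paper actually does. First, the boundedness of $\wt X_3$ near $C$ does \emph{not} follow from telescoping~(i) against $\|X_3\|_{0,K_0}$, since $X^{(n-1)}$ is not defined on $K_n\setminus K_{n-1}$; you must maintain an explicit two-sided bound $0<c_1<|X_3^{(n)}|<c_2$ on $K_n\setminus\mathring K_0$ as an inductive invariant, by prescribing the generalized null curve on the arcs to satisfy it and then choosing the new $\Omega_n$ close enough to the admissible set (this is the paper's condition~(e$_n$), and it is what makes the second map in~\ref{proper} proper). Second, you do not address injectivity or nonflatness of $\wt X$; the paper secures these by choosing each $\varepsilon_n$ so small that any holomorphic map within $2\varepsilon_n$ of $X^{(n)}$ in $\Cscr^1(K_n)$ is a nonflat embedding on $K_n$, and by inserting a general position step at each stage. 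With these two additions your outline coincides with the paper's proof.
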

	Along the proof, we use the following inductive construction of a Cantor set in a smoothly bounded compact connected convex domain $\Omega_0 \subset \C$, as explained in \cite{Forstneric2022RMI} or in \cite[\S 6]{AlarconForstneric2023}.  First, denote by $E_0$ the vertical straight line segment within $\Omega_0$ splitting it into two halves with the same width. Slightly shrinking the closures of these  halves, we get two smoothly bounded compact connected convex subsets $\Delta_0^1$ and $\Delta_0^2$ inside $\mathring{\Omega}_0$, as in Figure \ref{fig:c}. 
	Second, denote by $E_0^j$ the horizontal straight line segments partitioning 
	$\Delta_0^j$ into two subsets of equal height for $j=1,2$.  After shrinking the closures of these four subsets, we get four smoothly bounded compact connected convex domains $\Omega_1^j, j = 1, \dots , 4$, see Figure \ref{fig:c}. Set
	\begin{equation}\label{eq:o1}
		\Omega_1 := \bigcup_{j=1}^4 \Omega_1^j \subset \mathring{\Omega}_0.
	\end{equation}
	
	This concludes the first step in the construction.
	In the second step, this procedure is iterated for every $\Omega_1^j$, resulting in four pairwise disjoint smoothly bounded compact connected convex domains inside each $\Omega_1^j$. This leads to a compact set $\Omega_2 \subset \mathring{\Omega}_1$  formed by the union of sixteen smoothly bounded compact connected convex  domains. Proceeding inductively, a descending sequence
	\begin{equation}\label{eq:o2}
		\Omega_0 \Supset \Omega_1 \Supset \Omega_2 \Supset \hdots
	\end{equation} 
	is established. For each $i \in \n \cup \{0\}$, the set $\Omega_i$  consists of the union of $4^i$ pairwise disjoint smoothly bounded compact connected convex domains. The intersection
	\begin{equation}\label{eq:o3}
		C = \bigcap_{i = 0}^{\infty} \Omega_i \subset \mathring{\Omega}_0
	\end{equation}
	is a Cantor set in $\mathring\Omega_0\subset \C$. This construction adapts for the case when $\Omega_0$ is contained in a holomorphic coordinate chart on a Riemann surface.
	\begin{figure}[hb]
		\begin{minipage}[b]{0.45\linewidth}
			\centering
			\includegraphics[width=0.75\linewidth]{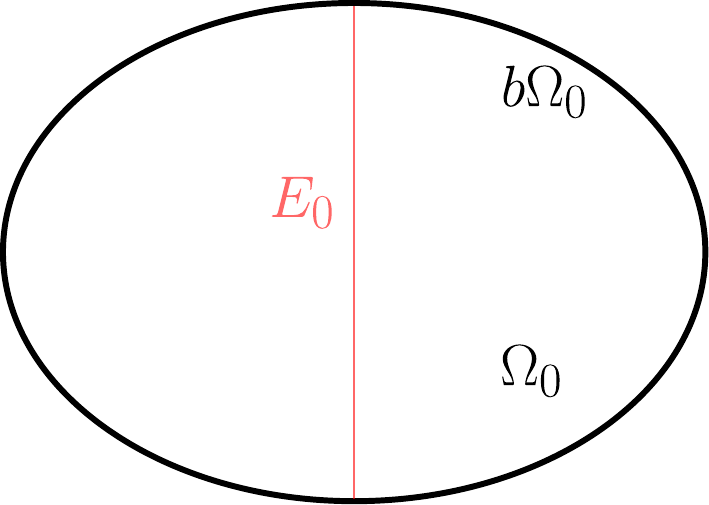} 
			\vspace{4ex}
		\end{minipage}
		\begin{minipage}[b]{0.45\linewidth}
			\centering
			\includegraphics[width=.75\linewidth]{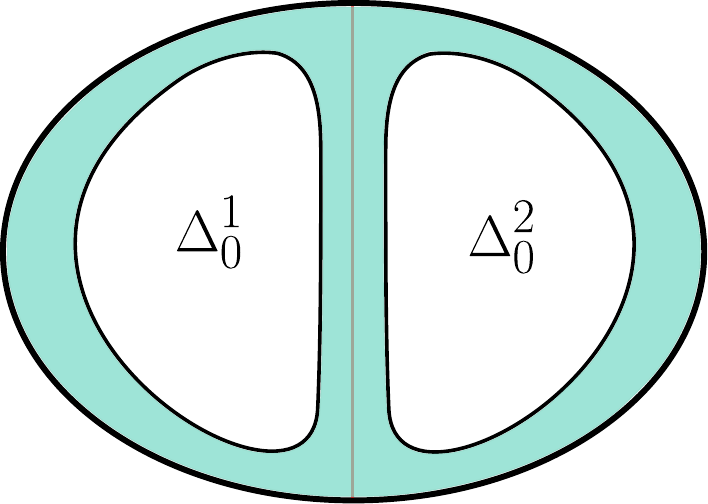} 
			\vspace{4ex}
		\end{minipage} 
		\begin{minipage}[b]{0.45\linewidth}
			\centering
			\includegraphics[width=.75\linewidth]{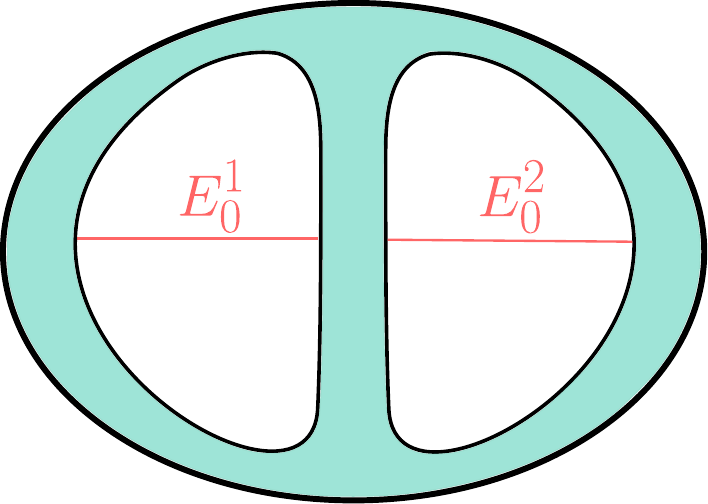} 
			\vspace{4ex}
		\end{minipage}
		\begin{minipage}[b]{0.45\linewidth}
			\centering
			\includegraphics[width=.75\linewidth]{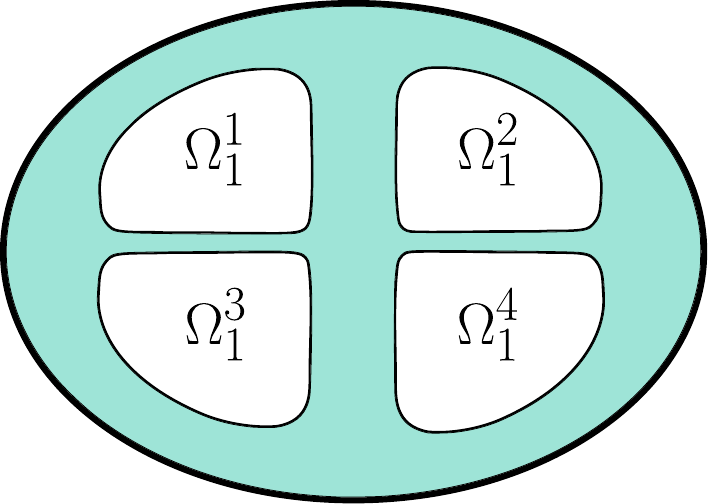} 
			\vspace{4ex}
		\end{minipage} 
		\caption{First step in the construction of a Cantor set. } \label{fig:c} 
	\end{figure}
	

	\begin{proof}[Proof of Theorem \ref{th:cantor}]
 We assume without loss of generality that $0 < \varepsilon < 1 $ and choose any $0<\varepsilon_0 < \varepsilon/2$. Set $ M_0 = M \setminus \mathring{\Omega}_0$, the latter being a smoothly bounded compact connected domain in $M$.
		We also write $X^0 = (X^0_1 , X^0_2 , X^0_3 ) :=X  \colon M_0 \to \C^3$, which we may assume to be a nonflat holomorphic null embedding and to also satisfy that
		\begin{equation}\label{eq:no0}
			X^0_3 \not = 0\quad \text{and} \quad \mgot (X^0) > 0 \text{ on } b \Omega_0 
		\end{equation}
		by a general position argument \cite[Theorem 3.4.1 (a)]{AlarconForstnericLopez2021Book} together with Hurwitz's theorem  \cite{Hurwitz1985}. Indeed, since the set $\{ (z_1, z_2, z_3) \in \C^3 \colon z_3 = 0 , z_1^2 + z_2^2 = 0\}$ is a smooth submanifold of $\C^3$ of real dimension $4$ and the boundary $b\Omega_0$ is of real dimension $1$, the general position theorem ensures \eqref{eq:no0}.
		Besides, Hurwitz's theorem guarantees that two close enough not identically zero holomorphic functions have the same number of zeros, counting multiplicity.  Hence, when we apply the general position theorem, we ask for jet interpolation on $X_3^{-1} (0)$ of high enough order, and thus we preserve the zero set of the third coordinate function provided that the approximation is close enough on $M_0$.  Finally, the compactness of $b \Omega_0$ and \eqref{eq:no0} give positive constants $c_1, c_2 \in \R$ such that
		\begin{equation}\label{eq:interval}
			0<c_1<| X^0_3| < c_2 \quad  \text{ on $ b \Omega_0.$} 
		\end{equation}
		Following the construction of a Cantor set explained before, we will construct a sequence of tuples $T_n = \{  M_n,X^n, \varepsilon_n  \}$ for any $n \in \N$, where
		\begin{itemize}
			\item $M_n = M \setminus \mathring{\Omega}_n$ and $\Omega_n$ is a compact subset in $\mathring{\Omega}_{n-1}$ consisting in $4^n$ smoothly bounded compact connected convex components, see \eqref{eq:o2}, 
			\item  $X^n \colon M_n  \to \C^3$ is a nonflat holomorphic null embedding,
			\item $\varepsilon_n > 0$ is a real number.
		\end{itemize}
		By our choice of $M_n$ and by \eqref{eq:o2} we have that 
		\begin{equation}\label{eq:exhaustion}
			M_0\Subset M_1 \Subset \cdots \Subset M_n \Subset \cdots \Subset \bigcup_{n = 0}^{\infty} M_n = M\setminus \bigcap_{n = 0}^{\infty} \Omega_n = M \setminus C,
		\end{equation}
		where $C$ is a Cantor set, see \eqref{eq:o3}. Moreover, $T_n$ will satisfy the following properties for every $n \in \N$.
		\begin{enumerate}[label=(\alph*$_n$)]
			\item \label{a} $\|X^n-X^{n-1}\|_{1,M_{n-1}}<\varepsilon_{n-1}$,
			\item \label{b} $(X_3^n)^{-1} (0) = X_3^{-1} (0)$, 
			\item \label{c} $\mgot(X^n)> n-1$ on $M_n \setminus \mathring{M}_{n-1} $,
			\item \label{d} $\mgot(X^n)>n $ on $b M_n$,
			\item \label{e} $0 < c_1 < | X_3^n| < c_2$ on $ M_n \setminus \mathring{M}_{0}$, 
			\item \label{f} $\displaystyle 0<\varepsilon_n <\varepsilon_{n-1}/2$ and every holomorphic map $Y: M_{n} \to \C^3$ satisfying $|| Y - X^{n} ||_{1, M_{n}} < 2 \varepsilon_n$ is  a nonflat embedding on $M_{n}$.
		\end{enumerate}
		Note that $T_0$ enjoy properties (\hyperref[b]{b$_0$}),  (\hyperref[d]{\rm d$_0$}), and  (\hyperref[e]{\rm e$_0$}), see \eqref{eq:no0} and \eqref{eq:interval}.
		Provided the existence of such $T_n$ for all $n \in \N$, we may consider the holomorphic map
		\begin{equation}\label{eq:lim}
			\wt X := \lim\limits_{n \to +\infty} X^n \colon M\setminus C = \bigcup_{n = 0}^{\infty} M_n \to \C^3,
		\end{equation}
		which satisfies 
		\begin{equation}\label{eq:eps}
			|| \wt  X - X^n ||_{1, M_n} < 2 \varepsilon_n < \varepsilon \quad \text{for all } n \in \N\cup \{0\}
		\end{equation}
		by \ref{a} and \ref{f}. Therefore $\wt X = (\wt X_1, \wt X_2, \wt X_3) \colon M\setminus C \to \C^3$ is an injective nonflat holomorphic null curve by \eqref{eq:exhaustion}, \eqref{eq:eps} and \ref{f}. Moreover, it satisfies property \ref{aprox} in the statement of the theorem by \eqref{eq:eps}. Property \ref{zeros} also holds by \ref{b}, Hurwitz's theorem and nonflatness of $\wt X$. 
		Let us explain \ref{proper}. It follows from \ref{c} and \eqref{eq:eps} that
		\[
		|(\wt X_1, \wt X_2) | + \varepsilon > | (X_1^n, X_2^n) | \ge \mgot (X^n) > n -1 \quad \text{on $M_n\setminus \mathring{M}_{n-1}$ for all $n \in \N$}.
		\]
		This and \eqref{eq:exhaustion} show that the image of a divergent curve in $M\setminus C$ is a divergent curve in $\C^2$, hence proving that $(\wt X_1, \wt X_2) \colon M \to \C^2$ is a proper map.
		Write $f=(1, \wt X_1, \wt X_2)/{\wt X_3} \colon M \setminus ( C \cup X_3^{-1} (0))  \to \C^3$. We have that $f$ is well defined by \ref{zeros}. Besides, \ref{c}, \ref{e}, \eqref{eq:eps}, and the assumption that $\varepsilon <1$ imply that
	\begin{equation}\label{eq::normt}
				|f|>	\Big|   \frac{1}{\wt X_3} (\wt X_1, \wt X_2 ) \Big| > \frac{| \left(X^n_1,  X^n_2 \right)|-\varepsilon}{|X^n_3|+\varepsilon} 	
			 > \frac{n-1}{c_2 + 1} -1 \quad   \text{ on  $M_n \setminus \mathring{M}_{n-1}$, $n\in\N$.}
	\end{equation}
	%
		Let us prove that $f$ is proper. Consider a divergent curve $\gamma\colon [0,1)\to M \setminus ( C \cup X_3^{-1} (0))$. Since $M$ is compact, we distinguish cases depending on whether $\gamma$ diverges to $C$ or $X_3^{-1} (0)$. 
		If $\gamma(t)\to C$ when $t\to 1$, equation \eqref{eq::normt} gives that  $|f\circ \gamma|$ diverges, see \eqref{eq:exhaustion}.
		In the case that $\gamma(t)\to X_3^{-1}(0)$ 
		 when $t\to 1$, since $|f\circ\gamma| > 1 / |\wt X_3 \circ \gamma|$, we have that $|f\circ\gamma|$ diverges.

		It only remains to explain the induction to construct such a sequence $T_n$.
		Recall that $T_0 = \{ M_0, X^0, \varepsilon_0\}$ satisfies (\hyperref[b]{b$_0$}),  (\hyperref[d]{\rm d$_0$}), and  (\hyperref[e]{\rm e$_0$}). Let us explain how to obtain $T_1$ from $T_0$.
		
		First, we use  (\hyperref[b]{b$_0$}),  (\hyperref[d]{\rm d$_0$}),  (\hyperref[e]{\rm e$_0$}), and \cite[Proposition 2.4]{Forstneric2022RMI} to extend $X^0$ to a nonflat generalized holomorphic null embedding $X^0 \colon M_0 \cup E_0 \to \C^3$ 
		such that $\mgot (X^0) > 0$ and $c_1<|X^0_3| < c_2$ on $E_0$, see Figure \ref{fig:c} and the construction of the Cantor set. 
		Observe also that $E_0$ intersects $M_0$ only at its endpoints, and these intersections are transverse by convexity of $\Omega_0$. This allows us to apply \cite[Theorem 3.1]{AlarconCastro-InfantesHidalgo2023} and approximate $X^0$ on the $\Cscr^1$ topology on $M_0 \cup E_0$ by a nonflat holomorphic null embedding $Y^0 \colon M \setminus (\mathring{\Delta}_0^1\cup \mathring{\Delta}_0^2)  \to \C^3$, where $\Delta_0^j$, $j=1,2$ are chosen sufficiently big, see Figure \ref{fig:c}, such that 
		\begin{equation}\label{eq:Y}
			\mgot (Y^0) >  0, \hspace{2mm} c_1<| Y^0_3 | < c_2   \text{ on }  M \setminus (\mathring{\Delta}_0^1\cup \mathring{\Delta}_0^2),   \hspace{2mm} \text{and}\hspace{2mm} (Y^0_3)^{-1} (0) = (X_3^0)^{-1} (0).
		\end{equation}
		In view of \eqref{eq:Y}, we can use \cite[Proposition 2.4]{Forstneric2022RMI} to extend $Y^0$ to $E_0^1 \cup E_0^2$ 
		such that $\mgot (Y^0) > 0$ and $c_1<|Y^0_3| <c_2$ on $E_0^1 \cup E_0^2$.
		Applying \cite[Theorem 3.1]{AlarconCastro-InfantesHidalgo2023} we approximate $Y^0$ in the $\Cscr^1$ topology on $ (M \setminus (\mathring{\Delta}_0^1\cup \mathring{\Delta}_0^2)  )\cup E_0^1 \cup E_0^2$ by a nonflat holomorphic null embedding $Z^0\colon M_1:= M\setminus \mathring{\Omega}_1 \to \C^3$ where $\Omega_1$ is chosen big enough, see \eqref{eq:o1} and Figure \ref{fig:c}, so that
		\begin{equation}\label{eq:Z}
			\mgot(Z^0) > 0 ,  \quad c_1< | Z^0_3 | < c_2   \text{ on } M_1, \quad \text{and}\quad
			(Z^0_3)^{-1}(0) = (Y_3^0)^{-1} (0).
		\end{equation}
		Now we apply Lemma \ref{lemma:ap}, Hurwitz's theorem, and a general position argument (see \cite[Theorem 3.4.1 (a)]{AlarconForstnericLopez2021Book}) to obtain a nonflat holomorphic null embedding  $X^1 \colon M_1 \to \C^3$ satisfiying (\hyperref[a]{\rm a$_1$}),  (\hyperref[b]{\rm b$_1$}), (\hyperref[c]{\rm c$_1$}), (\hyperref[d]{\rm d$_1$}), (\hyperref[e]{\rm e$_1$}), provided that the holomorphic null embeddings $X^0$, $Y^0$, $Z^0$, and $X^1$ are chosen sufficiently close to each other in the $\Cscr^1$ norm on $M_0$.
		We choose a positive real number $\varepsilon_1$ for which (\hyperref[f]{\rm f$_1$}) holds.
		This gives $T_1$. The inductive step to prove the existence of the sequence $T_n$ consists of an analogous reasoning.
	\end{proof}

	\begin{theorem}\label{th:bordered}
		Let $M$ be a bordered Riemann surface, $X = (X_1, X_2, X_3) \colon \overline{M} \to \C^3$ be a null curve of class $\Ascr^1(\overline{M})$. Let $\Lambda\subset M$ be a finite subset and $K\subset M$ be a smoothly bounded compact Runge domain.
		Assume also that  
		\( 
			X_3^{-1} (0) \subset M 
		\) and that there exists a number $s_0>1$ such that
		\begin{equation}\label{eq:hypo}
		|X_1+\igot X_2|>s_0 >1\quad \text{ on $\overline M$.}
	\end{equation}
		For any real number $\varepsilon>0$
		and any $k\in \n$ there exists a nonflat holomorphic null curve $\wt X = (\wt X_1, \wt X_2, \wt X_3) \colon M  \to \C^3$ such that 
		\begin{enumerate} [label={\rm (\alph*)}]
			
			\item \label{aproxb} $|| \wt X - X ||_{1, K} < \varepsilon$,
			
			\item \label{zerosb} $||\wt X_3 - X_3 ||_{0, \overline M} < \varepsilon$,
			
			\item\label{zerosb3} $\wt X_3^{-1} (0) = X_3^{-1} (0)$,
			
			\item \label{interb} $\wt X-X$ vanishes up to order $k$ everywhere on $\Lambda$,
			
			\item \label{properb} $(1, \wt X_1, \wt X_2)/ \wt X_3 \colon M \setminus X_3^{-1} (0) \to \C^3$ and  $h\colon M\setminus  X_3^{-1}(0)\to \R^+$ given by 
			\[
			\frac{1}{| \wt X_3|^4 } \left( \left( \left|  \wt X_1 + \igot  \wt X_2 \right|^2 -1    \right)^2 +  \left(  \left| \wt X_1^2 + \wt X_2^2 + \wt X_3^2\right|^2  -  \left| \wt X_1 - \igot \wt  X_2\right|^2  \right)^2      \right)
			\]
			are almost proper maps.
		\end{enumerate}
		Moreover, when $X|_\Lambda$ is injective, then $\wt X$ may be chosen injective too.
	\end{theorem}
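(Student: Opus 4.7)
The plan is to adapt the inductive strategy of Theorem \ref{th:cantor}'s proof to the bordered setting, replacing the descending Cantor-type construction by a normal exhaustion of $M$ and Lemma \ref{lemma:ap} by Lemma \ref{lemma:apds}. I would fix $1 < s_1 < s_0$, a normal exhaustion $K = K_0 \Subset K_1 \Subset K_2 \Subset \cdots$ of $M$ by smoothly bounded compact Runge domains with $\bigcup_n K_n = M$ and $\Lambda \cup X_3^{-1}(0) \subset \mathring{K}_0$, together with a rapidly decreasing sequence $\varepsilon_0 > \varepsilon_1 > \cdots > 0$ with $\sum_n \varepsilon_n < \varepsilon/4$. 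After a preparatory general position perturbation \cite[Theorem 3.4.1(a)]{AlarconForstnericLopez2021Book} and an application of Hurwitz's theorem, I would assume that $X$ extends to a nonflat null embedding on a neighborhood of $\overline{M}$ with $X_3$ nonvanishing on $\overline{M} \setminus \mathring{K}_0$.

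Next, I would inductively construct a sequence of nonflat null curves $X^n \colon \overline{K_n} \to \C^3$ of class $\Ascr^1(\overline{K_n})$, with $X^0 := X|_{\overline{K_0}}$, satisfying for each $n \geq 1$:
\begin{enumerate}[label = (\alph*$_n$)]
\item $\|X^n - X^{n-1}\|_{1, K_{n-1}} < \varepsilon_{n-1}$;
\item $\|X_3^n - X_3\|_{0, \overline{K_n}} < \varepsilon/2$, $(X_3^n)^{-1}(0) = X_3^{-1}(0)$, and $X^n - X$ vanishes to order $k$ on $\Lambda$;
\item $|X_1^n + \igot X_2^n| > s_1$ and $\mgot(X^n) > n$ on $bK_n$.
\end{enumerate}
The inductive step would have two phases. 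First, using the Runge--Mergelyan theorem for generalized null curves \cite[Theorem 3.1]{AlarconCastro-InfantesHidalgo2023}, extend $X^n$ to a nonflat null curve $Y^n \colon \overline{K_{n+1}} \to \C^3$ that is $\Cscr^1$-close to $X^n$ on $\overline{K_n}$ and $\Cscr^0$-close to $X$ near $bK_{n+1}$: build a generalized null curve on $\overline{K_n} \cup \Gamma$, where $\Gamma$ is an admissible system of arcs joining $bK_n$ to a neighborhood of $bK_{n+1}$ along which one interpolates between $X^n$ and $X$, then approximate by a genuine null curve on $\overline{K_{n+1}}$. Hypothesis \eqref{eq:hypo} then yields $|Y_1^n + \igot Y_2^n| > s_1$ on $bK_{n+1}$. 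Second, apply Lemma \ref{lemma:apds} to $Y^n$ on the bordered Riemann surface $K_{n+1}$ with interior compact $\overline{K_n}$, finite set $\Lambda$, thresholds $s_1$ (for the $|X_1+\igot X_2|$ bound) and $s = n+1$ (for the $\mgot$ bound), interpolation order $k$, and sufficiently small tolerance, producing $X^{n+1}$ satisfying (a$_{n+1}$)--(c$_{n+1}$). Nonflatness, embeddedness, and the zero set of $X_3$ are preserved by Hurwitz's theorem, general position, and choosing $\varepsilon_n$ small.

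Setting $\wt X := \lim_n X^n \colon M \to \C^3$, conclusions \ref{aproxb}--\ref{interb} follow directly from (a$_n$), (b$_n$) and the telescoping of the $\varepsilon_n$. For the almost properness in \ref{properb}, the boundedness of $\wt X_3$ on $\overline{M}$ combined with (b$_n$) and (c$_n$) gives
\[
\left|(1, \wt X_1, \wt X_2)/\wt X_3\right| \geq \mgot(\wt X)/|\wt X_3| > n/(\|X_3\|_{0, \overline M} + \varepsilon)
\]
on $bK_n$; therefore every connected component of the preimage of a compact subset of $\C^3$ is confined to some $K_m$, and since this map blows up near the isolated zeros of $\wt X_3 \subset \mathring{K}_0$, it is compact in $M \setminus X_3^{-1}(0)$. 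For the function $h$, writing $\wt X_1^2 + \wt X_2^2 = (\wt X_1 + \igot \wt X_2)(\wt X_1 - \igot \wt X_2)$ and using $|\wt X_1 + \igot \wt X_2| > s_1 > 1$ on every $bK_n$, a direct computation shows that on $bK_n$ at least one of the two summands in the numerator of $h$ dominates and forces $h$ to grow quadratically in $n$, while near $X_3^{-1}(0)$ the factor $1/|\wt X_3|^4$ drives $h \to +\infty$ because the numerator is bounded below away from the zeros; the same argument then yields almost properness of $h$. The injectivity clause under the hypothesis that $X|_\Lambda$ is injective is arranged by the standard general position technique \cite[Theorem 3.4.1(a)]{AlarconForstnericLopez2021Book} applied at each inductive step.

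The main obstacle I anticipate is the first phase of the inductive step: one must simultaneously match $X^n$ on $\overline{K_n}$ and the original $X$ near $bK_{n+1}$, because Lemma \ref{lemma:apds}, unlike Lemma \ref{lemma:ap}, controls $|X_1 + \igot X_2|$ only on the boundary and only under a boundary hypothesis. Thus the extension $Y^n$ must transport \eqref{eq:hypo} from $X$ to $bK_{n+1}$ while remaining $\Cscr^1$-close to $X^n$ on $\overline{K_n}$, and the jet interpolation on $\Lambda$ together with the preservation of $X_3^{-1}(0)$ must be respected throughout. This is handled by the generalized null curve and admissible arc construction outlined above, in the same spirit as the extension step in the proof of Theorem \ref{th:cantor}.
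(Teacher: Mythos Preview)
Your overall architecture is right (iterate Lemma~\ref{lemma:apds}, control $\mgot$ and $|X_1+\igot X_2|$ on successive boundaries, pass to the limit, argue almost properness of $h$ by the two–case computation), but Phase~1 of your inductive step is both unnecessary and, as written, defective. The Runge--Mergelyan step on the admissible set $\overline{K_n}\cup\Gamma$ only yields an approximating null curve $Y^n$ that is close to your generalized null curve on that set; it gives no control of $Y^n$ on the rest of the annulus $K_{n+1}\setminus(K_n\cup\Gamma)$. In particular you cannot guarantee $|Y_1^n+\igot Y_2^n|>s_1$ on \emph{all} of $bK_{n+1}$ (only near the finitely many endpoints of $\Gamma$), and, more seriously, you have no bound on $|Y_3^n-X_3|$ away from the admissible set. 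The latter means your condition~(b$_n$) cannot be propagated, so conclusion~\ref{zerosb} fails, and with it the global bound on $|\wt X_3|$ that your own argument for~\ref{properb} relies on. Including a collar $A_n$ of $bK_{n+1}$ in the admissible set repairs the boundary issue but not the $X_3$--closeness on the bulk of the annulus.

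The point you are missing is that Lemma~\ref{lemma:apds} takes a null curve defined on $\overline M$ and returns one on $\overline M$, with conclusion~\ref{5} reproducing the boundary hypothesis $|X_1+\igot X_2|>s_0$ on $b\overline M$. Hence the hypothesis of Lemma~\ref{lemma:apds} is self--propagating and you can iterate it directly on $\overline M$, never restricting to $\overline{K_n}$: set $X^0=X$ on $\overline M$, apply Lemma~\ref{lemma:apds} with interior compact $M_{n-1}$ to get $X^n$ on $\overline M$, and then choose $M_n\Subset M$ large enough (by continuity from $b\overline M$) so that $\mgot(X^n)>s_0+n$ on $bM_n$ and $|X_1^n+\igot X_2^n|>s_0$ on $\overline M\setminus\mathring M_n$. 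This is exactly what the paper does. Conclusion~\ref{zerosb} then comes for free from Lemma~\ref{lemma:apds}\ref{3} (a uniform bound on $\overline M$ at every step), and Phase~1 disappears entirely.
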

	\begin{proof} 
		Assume that $X|_\Lambda$ is injective, otherwise the proof is simpler. 
		Choose a number $\eta > 0$ such that 
		\begin{equation}\label{eq:eta}
			|| X_3||_{0, \overline{M}} + \varepsilon < \eta.
		\end{equation}
		Set $X^0 =  (X^0_1, X_2^0, X_3^0) : = X$ and $\varepsilon_0 : = \varepsilon$. 
		By the general position theorem \cite[Theorem 3.4.1 (a)]{AlarconForstnericLopez2021Book} we may assume that $X^0$ is a nonflat null embedding of class $\Ascr^1 (\overline M)$.  
		Set
		\begin{equation}\label{eq:sn}
			s_n \colon = s_0 + n
		\end{equation}
		and choose $M_0\subset M$ a smoothly bounded compact connected Runge domain with
		\begin{equation}\label{eq:k}
			K\cup X_3^{-1} (0)\subset \mathring M_0.
		\end{equation}
		We shall inductively construct a sequence $\{M_n,X^n,\varepsilon_{n}\}$ for any $n \in \N$ of smoothly bounded compact connected Runge domains $M_n$,
	 	null embeddings $X^n = (X_1^n, X_2^n, X_3^n) \colon \overline M  \to \C^3$ of class $\Ascr^1(\overline{M})$, and constants $\varepsilon_n>0$, such that
	\begin{equation}\label{eq:ex}
		M_0 \Subset M_1 \Subset \cdots \Subset M_n \Subset \cdots \Subset \bigcup_{n = 0}^{\infty} M_n = M,
	\end{equation} 
	and such that the following properties hold for every $n \in \N $.
		\begin{enumerate}[label=({\alph*$_n$})]
			\item \label{seq:aprox} $\|X^n-X^{n-1}\|_{1,M_{n-1}}<\varepsilon_{n-1}$,
			\item \label{seq:aprox3} $|| X_3^n - X_3^{n-1}||_{0, \overline{M}} < \varepsilon_{n-1}$,
			\item \label{seq:zeros3} $(X_3^n)^{-1} (0) = X_3^{-1} (0)$, 
			\item \label{seq:inter} $X^n-X$ vanishes to order $k$ everywhere on $\Lambda$,
			\item \label{seq:mgot} $\mgot(X^n)>s_n $ on $b M_n $,
			\item \label{seq:raiz2} $| X_1^n + \igot X_2^n | > s_0$ on $\overline M\setminus \mathring M_n $,
			\item \label{seq:embeded} $\displaystyle 0<\varepsilon_n <\varepsilon_{n-1}/2$ and every holomorphic map $Y\colon M_{n} \to \C^3$ satisfying $|| Y - X^{n} ||_{1, M_{n}} < 2 \varepsilon_n$ is  a nonflat embedding on $M_{n}$.
		\end{enumerate}
	
		Let us first prove the basis of the induction. By \eqref{eq:hypo} we can apply Lemma \ref{lemma:apds} to obtain a null curve $X^1 \colon \overline M \to \C^3$ of class $\Ascr^1 (\overline M)$ satisfying properties  \hyperref[seq:aprox]{(a$_1$)}, \hyperref[seq:aprox3]{(b$_1$)}, and \hyperref[seq:mgot]{(d$_1$)} as a direct consequence of \ref{1},  \ref{3} and \ref{4}.  Property  \hyperref[seq:zeros3]{(c$_1$)} is implied by  \hyperref[seq:aprox3]{(b$_1$)} and Hurwitz's theorem, provided interpolation on the set $X_3^{-1}(0)$ of high enough order.
		Moreover, by continuity of $X^1$, choosing a sufficiently big smoothly bounded compact connected Runge domain $M_1 \subset M$, properties \hyperref[seq:raiz2]{(e$_1$)} and  \hyperref[seq:raiz2]{(f$_1$)}  hold by  \ref{2} and
		\ref{5} of Lemma \ref{lemma:apds}.
		By Hurwitz's theorem and a general position argument we may assume that $X^1$ is a nonflat null embedding of class $\Ascr^1 (\overline M)$, recall that $X$ is injective on $\Lambda$.
		Finally we choose a positive number $\varepsilon_1>0$ such that  \hyperref[seq:embeded]{(g$_1$)} holds.  
		
		For the inductive step, in view of \ref{seq:raiz2} we can apply Lemma \ref{lemma:apds} to the data
		\[
		X^{n-1} \colon \overline M \to \C^3, \quad M_{n-1}, \quad \Lambda\cup X_3^{-1}(0), \quad s_n>s_0,  \quad\text{and}\quad \varepsilon_{n-1},
		\]
		thus obtaining a null curve $X^n \colon \overline M \to \C^3$ of class $\Ascr^1 (\overline M)$. Next, we choose $M_n$ and $\varepsilon_n$ arguing as in the basis case of induction. In this way we construct a sequence $\{ M_n, X^n, \varepsilon_n\}_{n \in \n}$ satisfying  \ref{seq:aprox}--\ref{seq:embeded}.
		
		By  \eqref{eq:ex}, \ref{seq:aprox} and \ref{seq:embeded}, the sequence $\{X^n\}_{n \in \n}$ converges uniformly on compacts in $M$ to a holomorphic map
		\[
		\wt X : = \lim\limits_{n \to +\infty} X^n \colon M \to \C^3
		\]
		satisfying 
		\begin{equation}\label{eq::n0}
			\|\wt X-X^n\|_{1, M_n}< 2\varepsilon_{n}<\varepsilon\quad \text{for all }n\in \N\cup \{0\}.
		\end{equation}
		Therefore, $\wt X=(\wt X_1,\wt X_2,\wt X_3)\colon M\to\C^3$ is a nonflat injective holomorphic null curve by \ref{seq:embeded},  \eqref{eq:ex} and \eqref{eq::n0}. Let us show that $\wt X$ satisfies the conclusions of the theorem. Property \ref{aproxb} follows from \eqref{eq:k} and \eqref{eq::n0}. 
		Property  \ref{zerosb} follows from \ref{seq:aprox3}. Property \ref{zerosb3} follows from \ref{seq:zeros3}, Hurwitz's theorem, and nonflatness of $\wt X$.
		Finally, \ref{interb} follows from \ref{seq:inter}.

		It remains to prove \ref{properb}; note that both maps in its statement are well defined by \ref{zerosb}. 
			Write $f=(1, \wt X_1, \wt X_2)/{\wt X_3} \colon M \setminus  X_3^{-1} (0)  \to \C^3$. Properties \eqref{eq:eta}, \eqref{eq:k}, 
			\ref{zerosb}, 
			\ref{seq:mgot} , and  \eqref{eq::n0} imply that
		\[ 
			|f|>	\Big|   \frac{1}{\wt X_3} (\wt X_1, \wt X_2 ) \Big| > \frac{| \left(X^n_1,  X^n_2 \right)|-\varepsilon}{|X_3|+\varepsilon} 	
			> \frac{s_n}{\eta} -1 \quad   \text{ on $b M_n$.}
		\]
		Recall that $\{ s_n \}_{n \in \n}$ diverges by \eqref{eq:sn}. Then, this inequality, \eqref{eq:ex}, and the fact that $1/ \wt X_3 \colon W \setminus X_3^{-1} (0) \to \C$ is a proper map, where $W$ is any small closed neighbourhood of $X_3^{-1} (0)$ in $M$, imply that $f$ is an almost proper map. 
		Let us now show that $h$ is also an almost proper map. 
		Choose for each $n \in \n $  a compact domain $W_n\subset \mathring M_0$ which is a union of pairwise disjoint smoothly bounded closed discs $W_{n,p}$ for any $p \in X_3^{-1} (0)$, such that $p \in \mathring{W}_{n,p} \Subset M_n$, recall \eqref{eq:k} and \eqref{eq:ex}.
		Set $M_n' = M_n \setminus \mathring{W}_n$, which is a smoothly bounded compact connected domain with $b M_n ' = b M_n \cup b W_n$.
		By \ref{zerosb}, \ref{zerosb3} and \eqref{eq:ex}, we may choose the discs $W_{n,p}$ shrinking at each step such that 
		\begin{equation}\label{eq:Mn''}
			M_0' \Subset M_1' \Subset \cdots \subset  \bigcup_{n \in \n} M_n'  = M \setminus X_3^{-1} (0) =M \setminus \wt X_3^{-1} (0), 
		\end{equation}
		and satisfying
		\begin{equation}\label{eq:boundWn}
			| \wt X_3 |^4 < 1/ s_n \quad \text{on  $b W_n$ for all $n\in\N $.} 
		\end{equation}
		We are going to show that  $h> (s_0 -1)^2 \, s_n$ on $b M_n'$ for every large enough $n \in \n$. Since $s_0 >1$ by \eqref{eq:hypo}, $\{ s_n \}_{n \in \n}$ diverges by \eqref{eq:sn}, and by \eqref{eq:Mn''} this would imply almost properness of $h$. 
		From \eqref{eq:hypo} and \eqref{eq::n0}, we have that
		\begin{equation}\label{eq:bound2}
			|  \wt X_1 + \igot  \wt X_2 |^2-1  > \left( \left|  X_1^0  + \igot   X_2^0 \right| - \varepsilon \right)^2 -1 > (s_0 - \varepsilon)^2 -1 > s_0 - 1 > 0 \quad \text{on } M_0, 
		\end{equation}
	provided that $\varepsilon< s_0 - \sqrt{s_0}$. 
		Since $b W_n \subset \mathring{M}_0$  for all $n\in \n$, by \eqref{eq:boundWn} and \eqref{eq:bound2} it follows that 
		\begin{equation}\label{eq:boundw}
		h > 	\frac{1}{| \wt X_3|^4 }  \left( \big|  \wt X_1 + \igot  \wt X_2 \big|^2     -1  \right)^2 
		>  (s_0 -1)^2 \, s_n \quad \text{ on } b W_n, \text{ for all } n \in \n .
		\end{equation}
		Let us now prove this inequality on $b M_n$ for $n\in \n$ sufficiently large. Observe that \eqref{eq:eta} and \ref{zerosb} imply that $|| \wt X_3||_{0, M} < \eta$, hence $1/|\wt X_3| > 1/\eta$ on $M \setminus X_3^{-1} (0)$. 
		Fix any $n \in \n$ and any point $p \in b M_n$.
		Suppose that 
		\begin{equation}\label{eq:case1}
			\mgot (X^n)(p) = \frac{1}{\sqrt2} \left|X_1^n(p) + \igot X_2^n(p) \right|   > s_n,
		\end{equation}
		recall \eqref{eq:m} and \ref{seq:mgot}. By \eqref{eq::n0} and \eqref{eq:case1}, we obtain that
		\[
		h (p) > \frac{1}{|\wt X_3 (p) |^4} 
		 \left( |  \wt X_1(p) + \igot  \wt X_2(p) |^2     -1  \right)^2 > \eta^{-4} \left( ( s_n \sqrt{2} - \varepsilon)^2 -1   \right)^2.
		\]
		By \eqref{eq:sn}, for every large enough $n \in \n$ we have that $h(p)> (s_0 -1)^2  \, s_n$.
		On the other hand, when \eqref{eq:case1} does not hold, we have 
		\begin{equation}\label{eq:case2}
			\mgot (X^n)(p) = \frac{1}{\sqrt2} \left|X_1^n(p) - \igot X_2^n(p) \right|   > s_n.
		\end{equation}
		Recall that $p \in b M_n$. Then \ref{seq:raiz2}, \eqref{eq::n0}, \eqref{eq:case2}, and the triangle inequality give that
		\begin{eqnarray*}
			h(p) & > & \frac{1}{|\wt X_3 (p) |^4} 
			 \left( \left| \wt X_1 (p)^2 + \wt X_2 (p)^2 + \wt X_3 (p)^2\right|^2   -  \left| \wt X_1 (p) - \igot \wt  X_2 (p) \right|^2  \right)^2 \\
			 &= & \eta^{-4} \left| \wt X_1(p)-\igot \wt  X_2(p) \right|^4\left( \left| \wt X_1 (p) +\igot  \wt X_2 (p) + \frac{\wt X_3(p)^2 }{\wt X_1 (p) - \igot \wt X_2 (p) }\right|^2 - 1  \right)^2  \\
			&  >& \eta^{-4} \left( s_n  \sqrt{2} - \varepsilon  \right)^4  \left(  \left(  |\wt X_1 (p) + \igot  \wt X_2 (p) | - \left|  \frac{\wt X_3 (p)^2 }{\wt X_1 (p) - \igot \wt X_2 (p) }\right|   \right)^2 -1       \right)^2 \\
			&>& \eta^{-4} \left( s_n  \sqrt{2} - {\varepsilon}  \right)^4 \left(  \left( s_0 - \varepsilon -  \frac{\eta^2}{s_n \sqrt{2} - \varepsilon } \right)^2 - 1  \right)^2.
		\end{eqnarray*}
	By \eqref{eq:hypo}, \eqref{eq:sn}, our choice of $\varepsilon$, and provided that $n \in \n$ is sufficiently large, we have that $h(p)> (s_0 -1)^2 \, s_n$. 
		Hence, we showed that $h > (s_0 -1)^2  \, s_n$ on $ b M_n ' = b W_n \cup b M_n$ for every large enough $ n \in \n$. This, \eqref{eq:hypo}, \eqref{eq:sn}, and \eqref{eq:Mn''} show that \ref{properb} holds.
	\end{proof}
We finish this section with the following analogue to Theorem \ref{th:cantor}.
	\begin{theorem}\label{th:cantor2}
		Let $M$ be a compact Riemann surface, $\Omega_0$ a smoothly bounded compact connected convex domain in a holomorphic coordinate chart on $M$, and $X = (X_1, X_2, X_3) \colon M \setminus \mathring{\Omega}_0 \to \C^3$ a  holomorphic null curve. Assume there exists a number $s_0>1$ such that
		\begin{equation}\label{eq:hypo2}
			|X_1+\igot X_2| > s_0 >1  \quad \text{ on $ M\setminus \mathring \Omega_0$.}
		\end{equation}
		For any $\varepsilon >0$ there exists a Cantor set $C \subset \mathring\Omega_0$ and an injective nonflat holomorphic null curve $\wt X = (\wt X_1, \wt X_2, \wt X_3) \colon M \setminus C \to \C^3$ such that 
		\begin{enumerate} [label={\rm (\alph*)}]
			
			\item \label{aprox2} $|| \wt X - X ||_{1, M\setminus \mathring{\Omega}_0} < \varepsilon$,
			
			\item \label{zeros2} $\wt X_3^{-1} (0) = X_3^{-1} (0)$.
			
			\item \label{almostproper} $(1, \wt X_1, \wt X_2)/ \wt X_3 \colon M \setminus (C \cup X_3^{-1} (0)) \to \C^3$ and $h\colon M\setminus (C\cup  X_3^{-1}(0))\to \R^+$ given by 
			\[
			\frac{1}{| \wt X_3|^4 } \left( \left( \left| \wt X_1 + \igot  \wt X_2 \right|^2  -1   \right)^2 +  \left(  \left| \wt X_1^2 + \wt X_2^2 + \wt X_3^2\right|^2  - \left| \wt X_1 - \igot  \wt X_2\right|^2   \right)^2  \right)
			\]
			are almost proper maps.
		\end{enumerate}
	\end{theorem}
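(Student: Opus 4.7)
The plan is to combine the Cantor set construction of Theorem \ref{th:cantor} with the analytic arguments of Theorem \ref{th:bordered}. The essential change with respect to Theorem \ref{th:cantor} is that the proper map $(\wt X_1,\wt X_2)$ is replaced by the almost proper maps $f=(1,\wt X_1,\wt X_2)/\wt X_3$ and $h$; accordingly, in the inductive step I would invoke Lemma \ref{lemma:apds} instead of Lemma \ref{lemma:ap}, so as to propagate the hypothesis $|X_1+\igot X_2|>s_0$ through the construction.

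First, choose $0<\varepsilon<\min\{1,s_0-\sqrt{s_0}\}$ and set $M_0=M\setminus\mathring\Omega_0$, $X^0:=X$, $s_n:=s_0+n$. By the general position theorem and Hurwitz's theorem I may assume $X^0$ is a nonflat holomorphic null embedding with $X_3^0\neq 0$ on $b\Omega_0$. Following the Cantor construction recalled after Theorem \ref{th:cantor}, I will inductively produce a sequence $T_n=\{M_n,X^n,\varepsilon_n\}$ with $M_n=M\setminus\mathring\Omega_n$ (where $\Omega_n\subset\mathring\Omega_{n-1}$ is a union of $4^n$ smoothly bounded compact convex pieces) and nonflat holomorphic null embeddings $X^n\colon M_n\to\C^3$ satisfying
\begin{itemize}[label=\rm --]
\item $\|X^n-X^{n-1}\|_{1,M_{n-1}}<\varepsilon_{n-1}$ and $(X_3^n)^{-1}(0)=X_3^{-1}(0)$,
\item $|X_1^n+\igot X_2^n|>s_0$ on $M_n$ (this replaces property \hyperref[c]{\rm(c$_n$)} of Theorem \ref{th:cantor}),
\item $\mgot(X^n)>s_n$ on $bM_n$,
\item $\varepsilon_n<\varepsilon_{n-1}/2$ and every holomorphic $Y\colon M_n\to\C^3$ with $\|Y-X^n\|_{1,M_n}<2\varepsilon_n$ is a nonflat embedding.
\end{itemize}
The inductive step is the same two-stage enlargement as in Theorem \ref{th:cantor}: first extend $X^{n-1}$ across the segments $E_{n-1}$ by \cite[Proposition 2.4]{Forstneric2022RMI}, preserving the strict inequality $|X_1+\igot X_2|>s_0$ along the added arcs; then approximate in the $\Cscr^1$ topology on admissible sets via \cite[Theorem 3.1]{AlarconCastro-InfantesHidalgo2023} so as to land on $M_n$ with the inequality still valid. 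Finally, apply Lemma \ref{lemma:apds} with $s=s_n$ and interpolation on $X_3^{-1}(0)$ to boost $\mgot$ above $s_n$ on $bM_n$ while keeping the zero set and the $s_0$-bound; general position plus Hurwitz give injectivity and preservation of $(X_3^n)^{-1}(0)$.

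Passing to the limit $\wt X:=\lim_n X^n$ on $M\setminus C$, where $C=\bigcap_n\Omega_n$, the estimates \hyperref[seq:embeded]{(g$_n$)} ensure $\wt X$ is an injective nonflat holomorphic null curve, which yields \ref{aprox2} and \ref{zeros2} exactly as in Theorem \ref{th:cantor}. For \ref{almostproper}, I would adapt the two divergence analyses of Theorem \ref{th:bordered}. For a divergent curve $\gamma$ in $M\setminus(C\cup X_3^{-1}(0))$, either $\gamma$ accumulates on $C$, in which case it crosses $bM_n$ for all large $n$ and the bound $\mgot(X^n)>s_n$ together with $|\wt X_3|\le\eta$ gives $|f\circ\gamma|\to\infty$; or $\gamma$ accumulates on $X_3^{-1}(0)$, in which case $|f|>1/|\wt X_3|$ diverges. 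For $h$, the key pointwise estimates on $bM_n$ are the same case analysis as in the proof of Theorem \ref{th:bordered}: if $|X_1^n+\igot X_2^n|>\sqrt 2\,s_n$ at $p\in bM_n$, the first summand in $h$ dominates; otherwise $|X_1^n-\igot X_2^n|>\sqrt 2\,s_n$ and one combines this with $|X_1^n+\igot X_2^n|>s_0>1$ to bound the second summand via the triangle inequality, exactly as in the displayed chain after equation \eqref{eq:case2}. Near $X_3^{-1}(0)$, the factor $1/|\wt X_3|^4$ dominates and $(|\wt X_1+\igot\wt X_2|^2-1)^2>(s_0-1)^2$ by \eqref{eq:hypo2}, so $h\to\infty$; choosing shrinking tubes $W_{n,p}$ around each zero with $|\wt X_3|^4<1/s_n$ on $bW_n$ gives $h>(s_0-1)^2\,s_n$ on $b(M_n\setminus\mathring W_n)$, from which almost properness follows as in Theorem \ref{th:bordered}.

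The main obstacle I foresee is the simultaneous control of three features at each inductive step: the Cantor-set enlargement (which requires the extension lemmas across segments with the strict lower bound on $|X_1+\igot X_2|$ preserved), the boundary bound $\mgot(X^n)>s_n$ supplied by Lemma \ref{lemma:apds} (whose Riemann--Hilbert mechanism must be applied on the many boundary components of $M_n$), and the preservation of the zero set of $X_3^n$ via Hurwitz-type interpolation. Once the sequence is built, the limiting arguments and the $h$-estimate are a direct transcription of the corresponding portions of Theorems \ref{th:cantor} and \ref{th:bordered}.
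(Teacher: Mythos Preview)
Your approach is essentially the same as the paper's: run the Cantor construction of Theorem \ref{th:cantor} but replace Lemma \ref{lemma:ap} by Lemma \ref{lemma:apds}, and verify \ref{almostproper} by transplanting the $f$- and $h$-estimates from the proof of Theorem \ref{th:bordered}. Two small technical points are worth tightening. First, you ask for $|X_1^n+\igot X_2^n|>s_0$ on all of $M_n$, but Lemma \ref{lemma:apds} only yields this on $b\overline M$ (its conclusion \ref{5}); the paper accordingly records this bound only on $bM_n$ (its property (c$_n'$)), which is all that is needed for the $h$-estimate on $bM_n$, while near $X_3^{-1}(0)\subset M_0$ the bound follows from \eqref{eq:hypo2} and the $\Cscr^1$-approximation on $M_0$. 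Second, you invoke an undefined $\eta$ with $|\wt X_3|\le\eta$: in the Cantor setting $X_3$ is only given on $M_0$, so a uniform bound on $|\wt X_3|$ over $M_n\setminus\mathring M_0$ must be built into the induction. The paper does this by retaining property (e$_n$), namely $0<c_1<|X_3^n|<c_2$ on $M_n\setminus\mathring M_0$, which is preserved along the segment extensions and under Lemma \ref{lemma:apds} via its conclusion \ref{3}; you should add this to your list of inductive properties.
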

	\begin{proof}
		We shall follow the construction of the Cantor set and the notation $M_0$, $X^0$ and $\varepsilon_0$ in the proof of Theorem \ref{th:cantor}, and assume that $X^0$ is a null embedding that satisfies \eqref{eq:no0} and \eqref{eq:interval}. 
		Assume also that $\varepsilon< s_0 - \sqrt{s_0}$.
		We will again construct a sequence  of tuples $T_n = \{ M_n, X^n , {\varepsilon}_n\}$ for any $n \in \n$ satisfying the properties \eqref{eq:exhaustion},  \ref{a}, \ref{b}, \ref{d}, \ref{e}, and \ref{f} in the proof of Theorem  \ref{th:cantor}. However, the following property will hold instead of \ref{c}:
		\begin{enumerate}[label=(\alph*$_n'$)]
			\setcounter{enumi}{2}
			\item  \label{c'} $|  X_1^n + \igot  X_2^n | > s_0 > 1 $ on $bM_n$.
		\end{enumerate}
		This will lead to a Cantor set $C \subset \mathring{\Omega}_0$ as in \eqref{eq:exhaustion} (not necessarily the same) and to a limit holomorphic null curve
		\[
		\wt X \colon = \lim\limits_{n \to + \infty}  X^n \colon M \setminus  C \to \C^3,
		\]
		which is well defined and satisfies \ref{aprox2},  \ref{zeros2}, see the proof of Theorem \ref{th:cantor}. 
		Condition \ref{almostproper} follows by arguing as in the proof of Theorem \ref{th:bordered}. Note that \ref{b} ensures that $X_3^{-1} (0) \subset M \setminus {\Omega}_0=\mathring M_0$, that is, equation \eqref{eq:k} holds. Then, the first map in   \ref{almostproper} is almost proper
 by	\eqref{eq:exhaustion}, \ref{d}, \ref{e}, \eqref{eq:eps},  and \eqref{eq:k}. The map $h$ is almost proper by
		 \ref{a}, \ref{b},  \ref{d}, \ref{f},  \eqref{eq:hypo2}, and (\hyperref[c']{\rm c$_n'$}). 

		It only remains to explain the inductive step to construct such sequence $T_n$.
		Let us explain how to obtain $T_1$ from $T_0$.
		The inductive step is again a repetition of this.
		Taking into account that $T_0$ satisfies  (\hyperref[b]{b$_0$}), (\hyperref[c']{c$_0'$}), and (\hyperref[e]{e$_0$}), we may reason as in Theorem \ref{th:cantor}, but adding
		$|Y^0_1+\igot Y^0_2|> s_0$ and $|Z^0_1+\igot Z^0_2|>s_0$ in equations \eqref{eq:Y} and \eqref{eq:Z}, respectively.
		Then we apply Lemma \ref{lemma:apds} to $ Z^0$ together with Hurwitz's theorem, and a general position argument (see \cite[Theorem 3.4.1 (a)]{AlarconForstnericLopez2021Book}). After choosing a suitable smoothly bounded compact connected domain $M_1$, this gives a nonflat holomorphic null embedding $ X^1$ satisfying (\hyperref[a]{\rm a$_1$}),  (\hyperref[b]{\rm b$_1$}), (\hyperref[c']{\rm c$_1'$}), (\hyperref[d]{\rm d$_1$}), (\hyperref[e]{\rm e$_1$}), provided that the holomorphic null embeddings $X^0$, $Y^0$, $Z^0$, and $X^1$ are chosen sufficiently close to each other in the $\Cscr^1$ norm on $M_0$. Finally, we choose ${\varepsilon}_1$ such that (\hyperref[f]{\rm f$_1$}) holds. This gives $ T_1$ as desired. 
	\end{proof}
	\begin{remark}\label{rmk:bd}
	Theorem \ref{th:cantor} and Theorem \ref{th:cantor2} also hold if $M$ is a bordered Riemann surface, recall Definition \ref{def:bordered}. Proving this is just a repetition of the corresponding proofs but changing the exhaustion in \eqref{eq:exhaustion} by one of the form $ M_n' \setminus \mathring{\Omega}_n$, where $\{ M_n'\}_{n \in \N \cup \{0\}}$ is an exhaustion of $M$ with $\Omega_0 \subset \mathring{M}_0 '$.

\end{remark}
\begin{remark}
	If we have a finite subset $\Lambda \subset M \setminus \Omega_0$ and any $k \in \n$ in the hypothesis of Theorem \ref{th:cantor} and \ref{th:cantor2}, we can ensure that
	\begin{itemize}
		\item[(d)] $\wt X - X$ vanishes up to order $k$ everywhere on $\Lambda$,
	\end{itemize}
by applying the corresponding interpolation properties in Lemma \ref{lemma:ap} or Lemma \ref{lemma:apds}.
\end{remark}
\begin{remark}
	The assumptions in \eqref{eq:hypo} and \eqref{eq:hypo2} are technical conditions for our approach to work, but we expect they are not necessary for the statements of Theorems  \ref{th:bordered} and \ref{th:cantor2}. 
\end{remark}


	\section{From holomorphic null curves to $\CMC$ surfaces in $\mathbb{H}^3$ and $\mathbb{S}^3_1$}\label{sec:HS} 
	We devote this section to the proofs of the theorems stated in the introduction, which are a consequence of the results proved in Section \ref{sec:proper}. 
	Let us first recall some properties and notation.

Recall that the special linear group $\SL \subset \Mcal_2(\C)$, as a subset of the $2\times 2$ matrices with complex entries, is 
\[
\SL=\left\lbrace A=\left(\begin{matrix}
	z_{11} & z_{12}\\
	z_{21} & z_{22}
\end{matrix}\right) \in \Mcal_2(\C): \det A=z_{11}z_{22}-z_{12}z_{21}=1 \right\rbrace .
\]
Martín, Umehara and Yamada \cite{MartinUmeharaYamada2009CVPDE} showed that the biholomorphism $\Tcal\colon \C^2\times\C^*\to \SL\setminus \{z_{11}=0\}$ given by
\begin{equation}\label{eq:Tcal}
	\Tcal(z_1,z_2,z_3)=\frac{1}{z_3}\left(\begin{matrix}
		1 & z_1+\igot z_2\\
		z_1-\igot z_2 & z_1^2+z_2^2+z_3^2
	\end{matrix}\right),\quad (z_1,z_2,z_3)\in \C^2\times\C^*,
\end{equation}
takes holomorphic null curves in $ \C^2\times\C^*$ into holomorphic null curves in $\SL\setminus\{z_{11}=0\}$. Further, the inverse $\Tcal^{-1}$ takes holomorphic null curves in $\SL\setminus\{z_{11}=0\}$ into holomorphic null curves in  $ \C^2\times\C^*$.
Given an open Riemann surface $M$, a {\em holomorphic null curve in} $\SL$ is a holomorphic immersion $F\colon M\to\SL$ which is directed by the quadric
\[
\Acal=\left\lbrace\left(\begin{matrix}
	z_{11} & z_{12}\\
	z_{21} & z_{22}
\end{matrix}\right) \in \Mcal_2(\C): z_{11}z_{22}-z_{12}z_{21}=0 \right\rbrace,
\]
that is, the derivative $F'\colon M\to\Mcal_2(\C)$ with respect to any local holomorphic coordinate on $M$ assumes values in $\Acal\setminus\{0\}$. 

Let $\mathbb{L}^4$ be the Lorentz-Minkowski space of dimension $4$ with the canonical Lorentzian metric $-dx_0^2+dx_1^2+dx_2^2+dx_3^2$. We consider the following identification with the hermitian matrices $\text{Her}(2)\subset \Mcal_2(\C)$:
\begin{equation}\label{eq:her}
	(x_0,x_1,x_2,x_3)\in \mathbb{L}^4 \Longleftrightarrow 
	\left(\begin{array}{cc}
		x_0 + x_3 & x_1+\igot x_2\\
		x_1-\igot x_2 & x_0 - x_3
	\end{array}\right)\in \text{Her}(2).
\end{equation}
The hyperboloid model for the hyperbolic $3$-space $\h^3$ is given by
\[
\h^3=\left\lbrace (x_0,x_1,x_2,x_3)\in \mathbb{L}^4 : -x_0^2+x_1^2+x_2^2+x_3^2=-1, x_0>0 \right\rbrace
\]
endowed with the metric induced by $\mathbb{L}^4$. With the above identification
\[
\h^3=\left\lbrace A \overline{A}^T : A\in \text{SL}_2 (\C) \right\rbrace=\text{SL}_2(\C)/ \text{SU}(2);
\]
here $\overline{\cdot}$ and $\cdot^T$ mean complex conjugation and transpose matrix, respectively, and $\text{SU}(2)$ is the special unitary group. 
In this setting, the canonical projection
\begin{equation*}\label{eq:piH}
	\pi_H\colon \text{SL}_2(\C)\to \h^3,\quad \pi_H(A)=A \overline{A}^T,
\end{equation*}
maps holomorphic null curves in $\text{SL}_2(\C)$ to conformal $\CMC$ immersions, or equivalently, Bryant surfaces, see \cite{Bryant1987Asterisque}. Since $\text{SU}(2)$ is compact,  $\pi_H$ is a proper map and hence it takes proper holomorphic null curves in $\text{SL}_2(\C)$ to proper conformal $\CMC$ immersions in $\h^3$. 
%
It should be noted that the restriction of $\pi_H$ to $\SL \setminus
\{z_{11} = 0\}$ is still surjective, and that the projection $\pi_H \circ \Tcal \colon \C^2 \times \C^* \to \h^3$ takes holomorphic null curves into conformal $\CMC$ immersions. 
	\begin{proof}[Proof of Theorem \ref{th:mainH}]
		Let $\Omega_0$ be a smoothly bounded  compact connected convex domain in a holomorphic coordinate chart on $M$. Consider any holomorphic null curve $X=(X_1,X_2,X_3)\colon M\setminus \mathring \Omega_0\to\c^3$ such that $X_3^{-1}(0)=\varnothing$, whose existence follows from \cite[Theorem 3.6]{AlarconForstnericLopez2021Book} and a suitable translation.
		
		Now we apply Theorem \ref{th:cantor} to $X$, which provides a Cantor set $C\subset M$ and a holomorphic null curve $\wt X = (\wt X_1, \wt X_2, \wt X_3) \colon M\setminus C\to \C^3$ such that 
		\begin{enumerate}[label=(\alph*1)]
			\item \label{eq:properH} $(1, \wt X_1, \wt X_2)/ \wt X_3 \colon M \setminus C  \to \C^3$ is a proper map,
			\item \label{x3zc}$\wt X_3^{-1} (0) = X_3^{-1}(0) = \varnothing$.
		\end{enumerate}
		We claim that the following map solves the theorem:
		\[
		\varphi :=\pi_H\circ \Tcal \circ \wt X\colon M\setminus C\to\h^3.
		\]
		By the preceding discussion and \ref{x3zc} we have that $\varphi$ is a well defined conformal $\CMC$ immersion. Further, \ref{eq:properH} implies that $\Tcal\circ \wt X$ is proper. Indeed, if we denote $(\wt F_{ij}) = \Tcal \circ \wt X$, by \eqref{eq:Tcal} we have that
		\begin{equation}\label{eq:norm}
		|\wt F_{11}|^2 + |\wt F_{12}|^2 + |\wt F_{21}|^2 = \left(2 \left( |\wt X_1|^2 + |\wt X_2|^2 \right) + 1 \right) / | \wt X_3 |^2 .
	\end{equation}
		Properness of $\pi_H$ imply that $\varphi$ is a proper map too. 
	\end{proof}
Now we consider the de Sitter $3$-space
\[
\s_1^3=\left\lbrace (x_0,x_1,x_2,x_3)\in \mathbb{L}^4 : -x_0^2+x_1^2+x_2^2+x_3^2=1 \right\rbrace
\]
endowed with the metric induced by $\L^4$. 
Under the identification given in \eqref{eq:her}, it becomes
\[
\s_1^3=\left\lbrace A e_3 \overline{A}^T : A\in \text{SL}_2 (\C) \right\rbrace=\SL/\text{SU$_{1,1}$}, \quad \text{where } e_3 = \left(\begin{matrix}
	1&0\\
	0&-1
\end{matrix}\right),
\]
see \cite{FRUYY09}. The standard projection is given by
\begin{equation*}\label{eq:piS}
	\pi_S\colon \text{SL}_2(\C)\to \s_1^3,\quad \pi_S(A)=A e_3 \overline{A}^T.
\end{equation*}
It maps holomorphic null curves $F = (F_{ij}) \colon M \to \SL$ into $\CMC$ faces, provided that the symmetric $(0,2)$-tensor
\begin{equation}\label{eq:condps}
-	\det \left[ d \left( Fe_3 \overline{F}^T \right)   \right] = (1 - | g |^2 )^2 \omega \overline{\omega} \not \equiv 0,
\end{equation}
or equivalently, $| g|$ is not identically one, where
$
g = -   d F_{12}/d F_{11} $ is a meromorphic function on $M$, 
and $\omega$ is a holomorphic $1$-form on $M$, see \cite[Theorems 1.7 and 1.9]{Fujimori06}. Note that the projection $\pi_S$ is not proper, since $SU_{1,1}$ is not a compact subgroup of $\SL$.
%
Conversely, every simply connected $\CMC$ face in $\s_1^3$ lifts to a holomorphic null curve in $\SL$ satisfying \eqref{eq:condps}, see \cite[Theorem 1.9]{Fujimori06}. 
As before, the restriction of $\pi_S$ to $\SL \setminus
\{z_{11} = 0\}$ is still surjective, and the projection $\pi_S \circ \Tcal \colon \C^2 \times \C^* \to \s_1^3$ carries holomorphic null curves $X \colon M \to \C^2 \times \C^*$ with $\Tcal \circ X$ satisfying \eqref{eq:condps} into $\CMC$ faces.  
Finally, if a holomorphic null curve $F \colon M \to \SL$ is of the form $F = \Tcal \circ X$ for some holomorphic null curve $ X = (X_1, X_2, X_3) \colon M \to \C^2 \times \C^*$, a straightforward computation shows that
	\begin{equation}\label{eq:defg}
		g = \frac{- X_3 ' X_3}{X_1 ' - \igot X_2 '} - ( X_1 + \igot X_2).
	\end{equation}
	\begin{proof}[Proof of Theorem \ref{th:bds}]
	 Let $X=(X_1,X_2,X_3)\colon M\to\c^3$ be any holomorphic null curve with $X_3^{-1}(0)=\varnothing$, satisfying \eqref{eq:hypo}, and such that the holomorphic null curve $ \Tcal \circ X \colon M \to \SL \setminus \{ z_{11} = 0\}$ satisfies \eqref{eq:condps}; in particular, by \eqref{eq:defg} there is a point $p \in M$ such that 
	 \begin{equation}\label{eq:g}
	 	|g (p)|  =  \left| \frac{ X_3 ' (p) X_3 (p) }{X_1 ' (p) - \igot X_2 ' (p) } + ( X_1 (p) + \igot X_2 (p)) \right| \not = 1 .
	 \end{equation}
 Such a null curve may be obtained by lifting a simply connected $\CMC$ face, applying \cite[Theorem 3.1]{AlarconCastro-InfantesHidalgo2023}, and composing it with a suitable translation so that $X$ satisfies \eqref{eq:hypo} and $\Tcal \circ X$ satisfies \eqref{eq:condps}.
		%
		%
		Then, Theorem \ref{th:bordered} gives a nonflat holomorphic null curve $\wt X\colon M\to \c^3$ such that
		\begin{enumerate}[label=(\alph*2)]
			\item \label{X3z} $\wt X_3^{-1}(0)=X_3^{-1}(0)=\varnothing$,
			\item \label{int} $\wt X -  X$ vanishes to order $1$ at $p$,
			\item \label{eq:ap} the map  $(1, \wt X_1, \wt X_2)/ \wt X_3 \colon M  \to \C^3$ is almost proper,
			\item \label{h} the map $h\colon M\to \R$ given by 
			\[
			\frac{1}{| \wt X_3|^4 } \left( \left( \left|  \wt X_1 + \igot  \wt X_2 \right|^2  -  1 \right)^2 +  \left(  \left| \wt X_1^2 + \wt X_2^2 + \wt X_3^2\right|^2 - \left| \wt X_1 - \igot \wt  X_2\right|^2   \right)^2  \right)
			\]
			is almost proper.
		\end{enumerate}
		The following map solves the theorem:
	\[
			\psi:=\pi_S\circ\Tcal\circ\wt X\colon M\to\s_1^3.
	\]
		Indeed,  it is well defined by \ref{X3z}, and it is a $\CMC$ face by \eqref{eq:condps}, \eqref{eq:defg},  \eqref{eq:g}, and \ref{int}.
		A straightforward computation shows that 
		\[
			| \psi |^2 \ge 
				\frac{1}{2 | \wt X_3|^4 } \left( \left( \left| \wt X_1 + \igot  \wt X_2 \right|^2  -1   \right)^2 
				+
				\left(  \left| \wt X_1^2 + \wt X_2^2 +\wt X_3^2\right|^2  - \left| \wt X_1 - \igot \wt X_2\right|^2 \right)^2  \right) = \frac{1}{2} h,
		\]
		where $|\psi|$ denotes the euclidean norm of $\psi$ in $\r^4$.  Since $h$ is almost proper by  \ref{h}, $|\psi|$ is also almost proper and hence so is $\psi$. Finally, since $\Tcal \circ \wt X$ is a complete holomorphic null curve by \eqref{eq:norm} and \ref{eq:ap}, $\psi$ is weakly complete, see  \cite{Yu97}.
\end{proof}
	
	\begin{proof}[Proof of Theorem \ref{th:mainS}]
		 Let $X=(X_1,X_2,X_3)\colon M \setminus \mathring{\Omega}_0 \to\c^3$ be any holomorphic null curve with $X_3^{-1}(0)=\varnothing$, satisfying \eqref{eq:hypo2}, and such that the holomorphic null curve $ \Tcal \circ X \colon M \setminus \mathring{\Omega}_0  \to \SL \setminus \{ z_{11} = 0\}$ satisfies \eqref{eq:condps}; in particular, by \eqref{eq:defg} there is a point $p \in M$ such that \eqref{eq:g} holds.
	The existence of such a null curve follows reasoning as in the proof of Theorem \ref{th:bds}.
	Then we apply Theorem \ref{th:cantor2} to obtain a Cantor set $C \subset \mathring{\Omega}_0$ and a holomorphic null curve $\wt X \colon M \setminus C \to \C^3$ such that
		\begin{enumerate}[label=(\alph*3)]
				\item \label{X3z1} $\wt X_3^{-1}(0)=X_3^{-1}(0)=\varnothing$,
				\item \label{int1} $\wt X -  X$ vanishes to order $1$ at $p$,
				\item \label{eq:apS}  the map  $(1, \wt X_1, \wt X_2)/ \wt X_3 \colon M \setminus C  \to \C^3$ is almost proper,
			\item \label{eq:properS} the map $h\colon M\setminus C \to \R$ given by 
			\[
			\frac{1}{| \wt X_3|^4 } \left( \left( \left|  \wt X_1 + \igot  \wt X_2 \right|^2 -1     \right)^2 +  \left(\left| \wt X_1^2 + \wt X_2^2 + \wt X_3^2\right|^2 -  \left| \wt X_1 - \igot \wt  X_2\right|^2     \right)^2  \right)
			\]
			is almost proper.
		\end{enumerate}
		Arguing as in the proof of Theorem \ref{th:bds}, the map 
		$
		\psi :=\pi_S\circ \Tcal \circ \wt X\colon M\setminus C\to\s_1^3
		$
	 is well defined by \ref{X3z1}, it is a $\CMC$ face by  \eqref{eq:condps}, \eqref{eq:defg}, \eqref{eq:g}, and \ref{int1},  it is almost proper by \ref{eq:properS}, and it is weakly complete by \eqref{eq:norm} and \ref{eq:apS}, see \cite{Yu97}.
	\end{proof}

\subsection*{Acknowledgments}
Castro-Infantes is partially supported by the grant PID2021-124157NB-I00 funded by MCIN/AEI/10.13039/501100011033/ ‘ERDF A way of making Europe’, Spain; and by Comunidad Aut\'{o}noma de la Regi\'{o}n de Murcia, Spain, within the framework of the Regional Programme in Promotion of the Scientific and Technical Research (Action Plan 2022), by Fundaci\'{o}n S\'{e}neca, Regional Agency of Science and Technology, REF, 21899/PI/22.

Hidalgo is partially supported by the State Research Agency (AEI) via the grant no.\ PID2020-117868GB-I00, funded by MCIN/AEI/10.13039/501100011033/, Spain.




\vspace*{-1mm}
\noindent Ildefonso Castro-Infantes

\noindent Departamento de Matem\'aticas, Universidad de Murcia, C. Campus Universitario, 9, 30100 Murcia, Spain.

\noindent  e-mail: {\tt ildefonso.castro@um.es }

\bigskip

\noindent  Jorge Hidalgo

\noindent Departamento de Geometr\'{\i}a y Topolog\'{\i}a e Instituto de Matem\'aticas (IMAG), Universidad de Granada, Campus de Fuentenueva s/n, E--18071 Granada, Spain.

\noindent  e-mail:   {\tt jorgehcal@ugr.es}

\end{document}